\documentclass[a4paper]{amsart}
\usepackage[utf8]{inputenc}
\usepackage[english]{babel}
\usepackage{fancyhdr}
\usepackage{amsmath}
\usepackage{amsfonts,amstext}
\usepackage{amssymb,amsthm,amscd,amsxtra,wasysym,graphicx}
\usepackage{mathrsfs,esint,comment}
\usepackage{tikz-cd}
\usepackage{enumitem,mathtools,dsfont}
 \usepackage{palatino,mathpazo}
\usepackage{palatino}
\usepackage{charter}
\usepackage{xcolor}

\def\Bibtex{{\rm B\kern-.05em{\sc i\kern-.025em b}\kern-0.08em T\kern-.1667em\lower.7ex\hbox{E}\kern-.125emX}}
\usepackage[breaklinks=true,bookmarks=false,pagebackref]{hyperref}
\usepackage{hyperref}
\hypersetup{
	unicode=false,          
	pdftoolbar=true,       
	pdfmenubar=true,       
	pdffitwindow=false,     
	pdfstartview={FitH}, 
	pdftitle={Modulus of continuity},   
	pdfauthor={Quang-Tuan Dang},   
	colorlinks=true,  
	linkcolor=purple,         
	citecolor=blue,        
	filecolor=green,      
	urlcolor=blue}

\usepackage{colonequals}

\usepackage{enumitem}

\numberwithin{equation}{section}

\newtheorem{theorem}{Theorem}[section]
\newtheorem{proposition}[theorem]{Proposition}

\newtheorem{lemma}[theorem]{Lemma}
\theoremstyle{definition}
\newtheorem{definition}[theorem]{Definition}
\newtheorem{remark}[theorem]{Remark}

\newtheorem{example}[theorem]{Example}

\newcommand{\Vol}{{\rm Vol}}

\newcommand{\ddc}{dd^c}

\DeclareMathOperator{\ric}{Ric}

\newcommand{\PSH}{{\rm PSH}}

\newcommand{\exph}{{\rm exph}}

\begin{document}
	\title[Uniqueness and Stability]{Uniqueness and Stability of Monge--Amp\`ere potentials in Big Cohomology Classes}
	\author{Quang-Tuan Dang, Lei Zhang and Bin Zhou}
		\address{Yau Mathmatical Sciences Center, Tsinghua University, Beiing 100084}
		\email{dangquangtuan10@gmail.com $\&$ dangqt@mail.tsinghua.edu.cn}
        \address{Yau Mathmatical Sciences Center, Tsinghua University, Beiing 100084}
        \email{leizhang92@mail.tsinghua.edu.cn}
        \address{School of Mathematical Sciences, Peking University, Beijing 100871}
        \email{bzhou@pku.edu.cn}
	
	\date{\today}
	\keywords{Complex Monge-Amp\`ere equations; stability; uniqueness; modulus of continuity}
	\subjclass[2020]{32U15, 32W20, 32Q15}

	\begin{abstract} In this paper, we establish a stability estimate and a uniform estimate for the modulus of continuity of solutions to the degenerate complex Monge--Amp\`ere equation in big cohomology classes. Consequently, we prove the uniqueness of solutions to complex Monge–Ampère mean field equations for a sufficiently small parameter. 
	\end{abstract}

	\maketitle
	
	 \tableofcontents
	\section{Introduction}


Finding canonical metrics in K\"ahler geometry goes back to Yau’s solution of the Calabi conjecture~\cite{yau1978ricci}.  K\"ahler–Einstein (KE) type metrics are of particular interest, and their study has revealed numerous deep connections between differential geometry and algebraic geometry.
In connection with the Minimal Model Program, the search for singular K\"ahler--Einstein metrics naturally leads to the study of degenerate complex Monge--Ampère equations; see~\cite{eyssidieux2009singular,song2012canonical,berman2013variational,berman2014kahler,berman2019kahler} and the references therein.

 Guedj and Zeriahi~\cite{guedj2007weighted} initiated the systematic study of degenerate complex Monge--Ampère equations with very general measures on the right-hand side, developing the finite-energy framework that underlies the theory of non-pluripolar Monge--Ampère measures. This theory was extended to big cohomology classes by Berman, Boucksom, Eyssidieux, Guedj, and Zeriahi~\cite{boucksom2010monge,berman2013variational}.

 \medskip Before stating our results, we fix some notation and terminology.
 Let $X$ be a compact K\"ahler manifold of dimension $n$ equipped with a K\"ahler form $\omega_X$. 
 We let $d$, $d^c$ denote the real differential operators on $X$ defined by $d:=\partial+\Bar{\partial}$, $d^c:=\frac{i}{2}(\Bar{\partial}-\partial)$ so that $\ddc=i\partial\Bar{\partial}$.
	Fix a closed real smooth (1,1)-form $\theta$ representing a big cohomology class. Let $\PSH(X,\theta)$ denote the set of all $\theta$-psh functions. 
	We say that the cohomology class $\{\theta\}$ is {\em big} if $\PSH(X,\theta-\varepsilon\omega_X)\neq \varnothing$ for some $\varepsilon>0$.

    Following \cite{berman2014kahler,Berman-Boucksom-Jonsson}, we say that $\theta_u=\theta+\ddc u$ has a well-defined Ricci curvature if the non-pluripolar self-product $ \theta_u^n $  of $\theta_u$ is well-defined on $X$  (see Section~\ref{sect: nonpluripolar} for this notion) and for every local holomorphic volume form $\Omega$ on $X$, we can write $\theta_u^n= e^{-2f} \omega_X^n$ for some function $f \in L^1_{\rm loc}$. In this case, we define $\ric \theta_u:=\ric \omega_X+ \ddc  f$. Let $\eta$ be a smooth closed (1, 1)-form representing the cohomology class for which the following decomposition holds
    \[ c_1(-K_X)=\{\theta\}+\{\eta\}.\] Let $\psi\in\PSH(X,\eta)$.
	 According to Darvas and Zhang~\cite{Darvas-Zhang24-twisted}, $\theta_u$ is said to be $\eta_\psi$-twisted K\"ahler–Einstein currents/metrics if
\begin{equation}\label{eq: KE}
    \ric \theta_\varphi=\lambda\theta_\varphi+\eta_\psi,\quad\lambda>0.
\end{equation}
We can reduce the twisted K\"ahler–Einstein equation
to the following highly degenerate complex Monge–Amp\`ere equation: 
\begin{equation}\label{eq: tKE}
    (\theta+\ddc \varphi)^n=e^{-\lambda \varphi+\chi-\psi}\omega_X^n,
\end{equation} where $\chi$ is another quasi-psh function on $X$ with analytic singularity type. Note that the left-hand side of~\eqref{eq: tKE} denotes the non-pluripolar Monge--Amp\`ere product; see Section~\ref{sect: nonpluripolar}. We assume that $\chi-\psi$ is klt, i.e., $\int_X e^{\chi-\psi}\omega_X^n<\infty$. For small $\lambda>0$, Guan-Zhou's openness theorem~\cite{guan2015proof} gives $\int_Xe^{-\lambda v+\chi-\psi}\omega_X^n<+\infty$ for all $v\in\mathcal{E}(X,\theta)$, so equation~\eqref{eq: tKE} makes sense. By Guan-Zhou's openness theorem again, there exists $p>1$ such that $e^{\chi-\psi}\in L^p(X,\omega_X^n)$. Darvas and Zhang~\cite[Proposition 5.2.]{Darvas-Zhang24-twisted} showed that there exists a solution $\varphi\in \PSH(X,\theta)$ with minimal singularities to~\eqref{eq: tKE} for some $\lambda>0$ small. Our first main
result establishes the uniqueness of equation~\eqref{eq: tKE}.

\begin{theorem}\label{thm: unique}
     Let $(X,\omega_X)$ be an $n$-dimensional compact K\"ahler manifold and $\theta$ a closed smooth (1,1) form representing a big cohomology class. Then there exists $\lambda_0$, depending
on $X$, $\theta$, $\omega_X$, $n$, $p$, and $\|e^{\chi-\psi}\|_p$, such that for all $\lambda\in(0,\lambda_0)$, the equation \eqref{eq: tKE} has a unique solution $\varphi\in\PSH(X,\theta)$ with minimal singularities.
\end{theorem}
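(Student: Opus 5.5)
We prove uniqueness by a contraction-type argument that combines a stability estimate with a uniform sup bound. The stability estimate is the kind of $L^1$--$L^\infty$ estimate for $(\theta+\ddc u)^n = f\,\omega_X^n$ with $f\in L^p$, relative to minimal singularities, that the paper announces.

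Let $\varphi_1,\varphi_2$ be two solutions of \eqref{eq: tKE} with minimal singularities, and set $f_j:=e^{-\lambda\varphi_j+\chi-\psi}$. There are three steps.

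\textbf{Step 1: uniform sup bound.} We bound $\sup_X|\varphi_j-V_\theta|$ by a constant $C_0$ independent of $\lambda\in(0,\lambda_0)$. Normalization comes from the mass identity $\int_X e^{-\lambda\varphi_j+\chi-\psi}\omega_X^n=\Vol(\{\theta\})$.
\begin{itemize}
\item \emph{Upper bound on the sup.} If $\sup_X\varphi_j$ were very negative, the integral would be too large, since $e^{\chi-\psi}$ has positive integral. This bounds $\sup_X\varphi_j$ from below. Conversely, $\varphi_j-\sup_X\varphi_j$ lies in a compact family of $\theta$-psh functions, so by uniform Skoda integrability (valid for $\lambda p'$ small, where $1/p+1/p'=1$) and H\"older's inequality the integral is controlled, which bounds $\sup_X\varphi_j$ from above.
\item \emph{Density bound.} This yields $\|f_j\|_{L^{q}}\le C$ for some $q>1$ close to $1$, uniformly in $\lambda$.
\item \emph{Conclusion.} The $L^\infty$ estimate of Boucksom--Eyssidieux--Guedj--Zeriahi in big classes then gives $\varphi_j\ge V_\theta-C_0$.
\end{itemize}

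\textbf{Step 2: stability estimate.} We compare the two densities. By the mean value theorem,
\[
|f_1-f_2|\le \lambda|\varphi_1-\varphi_2|\,e^{\lambda C_0'}\,e^{-\lambda V_\theta}e^{\chi-\psi},
\]
so $\|f_1-f_2\|_{L^1}\le C\lambda\|\varphi_1-\varphi_2\|_{L^\infty}$. Here we use that $\varphi_1-\varphi_2$ is bounded, since both functions have minimal singularities, and that $e^{-\lambda V_\theta}$ is bounded above because $V_\theta\ge$ a function with analytic singularities of small Lelong number times $\lambda$. We then invoke a stability theorem of the form
\[
\|u_1-u_2\|_{L^\infty}\le C\,\|f_1-f_2\|_{L^1}^{\gamma},
\]
with normalization $\sup_X u_j=0$ or an equivalent normalization. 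This is only Hölder with $\gamma<1$, which is not enough to close a linear contraction.

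\textbf{Step 3: closing the argument, the main obstacle.} Because $\gamma<1$, I would instead use the equation structure directly via a comparison principle in the spirit of Berman's and Kołodziej's arguments.
\begin{itemize}
\item Set $M:=\sup_X(\varphi_1-\varphi_2)$, assumed positive.
\item On the set $\{\varphi_1>\varphi_2+M-\varepsilon\}$, the comparison principle for minimal-singularity potentials gives
\[
\int e^{-\lambda\varphi_1}\mu\le\int e^{-\lambda\varphi_2}\mu,
\]
where $\mu=e^{\chi-\psi}\omega_X^n$.
\item On that set, $e^{-\lambda\varphi_1}\le e^{-\lambda(M-\varepsilon)}e^{-\lambda\varphi_2}$.
\end{itemize}
This alone is inconclusive, since it only gives $M\le$ something. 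The honest route is the following. The modulus-of-continuity and stability estimates, uniform for $\lambda<\lambda_0$, show that $\varphi_1-\varphi_2$ is small when $\lambda$ is small, because both solutions are close to the solution $\varphi_0$ of the $\lambda=0$ normalized equation, unique by Dinew's theorem. Linearizing around $\varphi_0$, the operator $\Delta_{\theta_{\varphi_0}}+\lambda$ is invertible for $\lambda$ below the first eigenvalue. The difference $w=\varphi_1-\varphi_2$ satisfies $\Delta_{\theta_{\varphi_t}}w=-\lambda w$-type identities averaged along $t$.

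I would implement this via the energy functional. The twisted Ding functional $D_\lambda$ is strictly convex along psh geodesics in $\mathcal E^1$ for small $\lambda$. More precisely, $-\frac1\lambda\log\int e^{-\lambda u}\mu$ has second variation bounded by $\lambda\,\mathrm{Var}$, and this is dominated by the Monge--Ampère energy's strict convexity through a Poincaré-type inequality on the weighted space. The Poincaré constant is controlled by the uniform estimates of Steps 1--2.

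Producing this uniform Poincaré/spectral gap for singular big-class potentials is the expected hardest step. Failing that, I would try to use the stability estimate iteratively, with a sharper exponent $\gamma=1$ in $L^1$-capacity form.
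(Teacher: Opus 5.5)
\textbf{There is a genuine gap: your proposal does not reach a proof of uniqueness.} Steps 1--2 are reasonable preparation. Step 3, where uniqueness must actually come from, is only a list of candidate routes, none carried out, and each rests on something you do not have.

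\begin{itemize}
\item The linearization route needs local uniqueness near the $\lambda=0$ solution and a spectral gap for $\Delta_{\theta_{\varphi_0}}$ with singular big-class potentials.
\item The Ding-functional route gets no strict convexity from the Monge--Amp\`ere energy, which is affine along psh geodesics. Along affine segments, $-E''$ would have to dominate $\lambda\,\mathrm{Var}$ of the tangent vector with respect to $e^{-\lambda u_t}\mu$. That is exactly the uniform weighted Poincar\'e inequality you say you cannot produce.
\item If your fallback means a Lipschitz bound $\sup_X|u_1-u_2|\le C\|f_1-f_2\|_{L^1}$, it is false already for $n=1$, even with $f_j$ uniformly bounded in $L^p$. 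The logarithmic Green kernel gives examples where $\sup_X|u_1-u_2|/\|f_1-f_2\|_1\to\infty$.
\end{itemize}

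Two smaller slips. First, $e^{-\lambda V_\theta}$ is unbounded as soon as $\{\theta\}$ is not nef; it is only $L^q$-integrable for small $\lambda$, which is enough. Second, only $\lambda\sup_X\varphi_j$ and $\varphi_j-\sup_X\varphi_j$ are controlled uniformly in $\lambda$, so you should work with sup-normalized potentials.

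\textbf{The missing idea} is a stability estimate that is \emph{linear} in a norm of the $n$-th roots of the densities. The paper proves this in big classes (Theorem~\ref{thm: stability0}, following Lu--Phung): for probability densities bounded in $L^w$ and sup-normalized minimal-singularity solutions,
\[
\sup_X|\varphi-\psi|\le C\,\||f^{1/n}-g^{1/n}|^n\|_w^{1/n}.
\]
The proof runs as follows.
\begin{itemize}
\item Take $u_b=P_\theta(b\varphi-(b-1)\psi)$ with $b\approx\||f^{1/n}-g^{1/n}|^n\|_w^{-1/n}$.
\item On the contact set, $b^{-1}u_b+(1-b^{-1})\psi$ touches $\varphi$ from below. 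The maximum principle and the mixed Monge--Amp\`ere inequality then give $\theta_{u_b}^n\le(b|f^{1/n}-g^{1/n}|+g^{1/n})^n\,dV$. This density has $L^w$-norm bounded independently of $b$, so Ko{\l}odziej's estimate gives $u_b-\sup_Xu_b\ge V_\theta-C_0$.
\item A mass-splitting argument at level $\alpha=\frac12(\sup_X(\varphi-\psi)-\sup_X(\psi-\varphi))$ produces a contact point in $\{\varphi>\psi+\alpha\}$. Hence $\sup_Xu_b\ge b\alpha-C_0$.
\item Combining with $u_b\le b\varphi-(b-1)\psi$ yields $\sup_X|\varphi-\psi|\le 4C_0/b$.
\end{itemize}

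With this estimate your contraction closes. Write the normalized densities as $e^{-\lambda\varphi_0-c_1}f$ and $e^{-\lambda\psi_0-c_2}f$, where $|c_i|\le C$ and $|c_1-c_2|\le C\lambda\sup_X|\varphi_0-\psi_0|$. Their $n$-th roots differ pointwise by at most $C\lambda\sup_X|\varphi_0-\psi_0|\,(e^{-\lambda V_\theta}f)^{1/n}$, and $e^{-\lambda V_\theta}f\in L^q$ for some $q>1$ once $\lambda$ is small. The estimate then gives $\sup_X|\varphi_0-\psi_0|\le C\lambda\sup_X|\varphi_0-\psi_0|$. For $\lambda<1/C$ this forces $\varphi_0=\psi_0$, hence $c_1=c_2$ and $\varphi=\psi$.
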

We note that the result in Theorem~\ref{thm: unique} cannot hold for all $\lambda>0$, as shown in~\cite[Example 3.5]{Lu-Phung25-uniqueness}. The proof of Theorem ~\ref{thm: unique} is based on a refinement of the stability estimate of solutions in big cohomology classes obtained in~\cite{kolodziej2003monge,
dinew2010satbility,guedj2018stability,
lu2021stability,Lu-Phung25-uniqueness}. As a consequence, the solution to the twisted K\"ahler-Einstein equation~\eqref{eq: KE} is unique for $\lambda>0$ sufficiently small.

\medskip
Beyond existence and uniqueness, a fundamental problem is to understand the regularity of solutions. When the right-hand side has density in $L^p$ for some $p>1$, the continuity and H\"older regularity of solutions to degenerate complex Monge--Ampère equations have been extensively investigated; see~\cite{kolodziej1998complex,kolodziej2008holder,hiep2010holder,eyssidieux2011viscosity,boucksom2010monge,demailly2014holder,dang2021continuity,DangDoPham26-modulus-big} and the references therein. More generally,  the modulus of continuity of the solution of complex Monge--Ampère equations is closely related to the geometry of the associated K\"ahler metrics. Such estimates, together with suitable control of the Monge--Ampère densities, play an important role in establishing uniform diameter and non-collapsing estimates and in studying Gromov--Hausdorff limits~\cite{Li_yang_2021,guo2021modulus,guedj2025-diameter,liu2024relative}.
For further developments on this topic, we refer interested readers to~\cite{Fu-Guo-Song2020-geometric,guo2022-local,guo2022-diameter,Guo2024-diameter2,vu2024continuity,do2023log,Guedj-To-25-green,vu24-diameter,nguyen-vu24-diameter,do-nguyen-vu25-volume,zhang2026complex} and references therein.

\medskip  Our second result of this paper is to extend the one obtained by Guo, Phong, Tong, and Wang~\cite{guo2021modulus} to the case of arbitrary {big} cohomology classes, without the nef assumption. 
Precisely, we consider the complex Monge-Amp\`ere equation
\begin{equation}\label{eq: dcmae}
		(\theta+\ddc \varphi)^n=\mu,\quad\sup_X \varphi=0, 
	\end{equation}
	where $\mu=f\omega_X^n$ with $0\leq f\in L^1(\log L)^p(X{,\omega_X^n})$ for $p>n$, satisfies the compatibility condition $\mu(X)=\Vol(\theta)$, $\varphi$ is the unknown $\theta$-psh function. The $p$-Nash entropy of $f$ is defined as
    \[\|f\|_{L^1(\log)^p}:=\int_X\log(1+ f)^pf\omega_X^n. \]

Let $\textrm{Amp}(\theta)$ denote the {\em ample locus} of $\theta$ which is roughly speaking the largest Zariski open subset where $\{\theta\}$ locally behaves like
	a K\"ahler class.
\begin{theorem}\label{thm: mainthm}
    Let $(X,\omega_X)$ be an $n$-dimensional compact K\"ahler manifold and $\theta$ a closed smooth (1,1) form representing a big cohomology class. Let $\mu=f\omega_X^n$ be a positive measure with $\mu(X)=\Vol(\theta)$, where $0\leq f\in L^1(\log L)^p(X,\omega_X^n)$ for $p>n$.  
    Let $\varphi\in\mathcal{E}(X,\theta)$ be a solution to~\eqref{eq: dcmae}. 
    Then for any compact subset $U\Subset \textrm{Amp}(\theta)$,
    there exists a constant $C$ depending on $\omega_X$, $\theta$, $U$, $n$, $p$, and $\|f\|_{L^1(\log)^p}$ such that
    \begin{equation}
        |\varphi(x)-\varphi(y)|\leq \frac{C}{|\log d(x,y)|^\alpha},
    \end{equation} for all $x,y\in U$, where $d(x,y)$ denotes the geodesic distance of the two points $x,y$ with respect to $\omega_X$ and $\alpha\in(0,\frac{p-n}{n})$.
\end{theorem}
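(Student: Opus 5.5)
We would follow the strategy of Guo--Phong--Tong--Wang~\cite{guo2021modulus}, adapted to the big setting by working with potentials relative to the envelope $V_\theta:=\sup\{u\in\PSH(X,\theta): u\le 0\}$ and a Kähler-current potential with analytic singularities. We fix $\rho\in\PSH(X,\theta)$ with analytic singularities such that $\theta+\ddc\rho\ge \delta\omega_X$, $\sup_X\rho=0$, and $\rho$ is smooth on $\textrm{Amp}(\theta)$, with $\rho\ge -C_U$ on $U$.

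\emph{Step 1 (uniform $L^\infty$ bound).} Using the entropy-type $L^\infty$ estimate for big classes (the PDE method of Guo--Phong--Tong, in its big-class version, or a Kołodziej-type capacity argument), we show $V_\theta - C\le\varphi\le V_\theta$, with $C$ depending on $\|f\|_{L^1(\log)^p}$ with $p>n$. In particular $\varphi$ has minimal singularities. The point to record is a quantitative comparison: if $u,v\in\PSH(X,\theta)$ with minimal singularities and $(\theta+\ddc u)^n=g\omega_X^n$ with $g$ of bounded $p$-entropy, then $\sup_X(v-u)\le C\|(v-u)_+\|_{L^1(\mu)}^{\gamma}$ for a suitable $\gamma$. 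This is the stability estimate underlying Theorem~\ref{thm: unique}.

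\emph{Step 2 (regularization).} For small $\delta>0$, we use Demailly's regularization through the exponential map, or Kiselman-type convolution $\Phi_\delta(x)=\sup_{|\zeta|\le\delta}\varphi(\exph_x(\zeta))$ as in the Berman--Demailly/Zeriahi approach. Corrected by the Kiselman–Legendre transform, this gives $\varphi_\delta$ with $\theta+\ddc\varphi_\delta\ge -A\kappa(\delta)\omega_X$ away from the poles of $\rho$. We then form the competitor
\[
\tilde\varphi_\delta=(1-\kappa)\varphi_\delta+\kappa\rho-C\kappa,\qquad \kappa=A\kappa(\delta)/\delta_0,
\]
which is genuinely $\theta$-psh with minimal singularities. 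Since $\rho$ is smooth on $\textrm{Amp}(\theta)$, the contribution of $\kappa\rho$ on $U$ is at most $C_U\kappa$.

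\emph{Step 3 (stability applied).} Applying the quantitative stability of Step 1 to $v=\tilde\varphi_\delta$ and $u=\varphi$ reduces matters to bounding $\int_X(\varphi_\delta-\varphi)_+ f\,\omega_X^n$. We split this integral by Hölder/Young in the Orlicz pairing $L^1(\log L)^p$ versus $\exp(L^{1/p})$. The $L^1$-type oscillation $\int_X(\varphi_\delta-\varphi)\,\omega_X^n$ is $O(\delta^2)$ by the Laplacian mass and Jensen's lemma. Interpolating with the entropy bound converts this polynomial smallness into a logarithmic gain $|\log\delta|^{-\alpha}$, with exponent governed by $(p-n)/n$ via the dependence of the $L^\infty$ estimate on the entropy exponent. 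This yields $\varphi_\delta-\varphi\le C|\log\delta|^{-\alpha}$ on $U$, and since $\varphi(y)\le\varphi_\delta(x)$ for $d(x,y)\le\delta$, the claimed modulus follows by symmetry.

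\emph{Main obstacle.} The hard step is the quantitative stability with the sharp logarithmic rate in the big case. Here the comparison functions fail to be bounded, and $\varphi_\delta$ is not $\theta$-psh near the non-ample locus, so we must control both errors there. Our first approach is to run the Guo--Phong--Tong auxiliary equation argument with the twist by $\rho$. That is, we solve $(\theta+\ddc\psi)^n=\frac{(\tilde\varphi_\delta-\varphi-s)_+}{A_s}f\omega_X^n$ for $\psi$ with minimal singularities, and compare $-\varepsilon(-\psi+V_\theta+\Lambda)^{n/(n+1)}$ with $\tilde\varphi_\delta-\varphi-s$ on $\textrm{Amp}(\theta)$. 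The boundary behavior at the non-ample locus is handled because all functions involved have minimal singularities. The choice $\alpha<(p-n)/n$ comes out of the De Giorgi-type iteration $s\mapsto A_s$.
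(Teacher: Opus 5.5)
\textbf{There is a genuine gap in your final step.} Your overall architecture matches the paper's: minimal singularities, then an $L^1$--$L^\infty$ stability estimate from an auxiliary equation with density $\mathbf{1}_{\Omega_s}(\psi-\varphi-s)f/A_s$ and a De Giorgi iteration, then Demailly regularization corrected by a Kiselman--Legendre transform and a K\"ahler current with analytic singularities.

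The problem is your conclusion via ``$\varphi(y)\le\varphi_\delta(x)$ whenever $d(x,y)\le\delta$''. This holds for a raw sup-convolution, but not for the Legendre-corrected function you actually use. The correction is needed because, for Demailly's regularization, the negative part of $\theta+dd^c\rho_t\varphi$ is controlled only by $\partial_{\log t}\rho_t\varphi$, which is not small a priori. After the correction,
$\varphi_\delta(x)=\inf_{0<t\le\delta}[\rho_t\varphi(x)+K(t-\delta)-c\log(t/\delta)]$
dominates the regularization only at the minimizing radius $t_0(x)$. That radius may be much smaller than $\delta$, so no oscillation bound at scale $\delta$ follows.

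\textbf{The missing idea is the Demailly--Kiselman argument, and it fixes the parameters.}
\begin{itemize}
\item The level $c$ (your unspecified $\kappa(\delta)$) must be of the same order as the target modulus $b(\delta)=|\log\delta|^{-\gamma}$. This is also the weight of the analytic potential $\psi_0$ (your $\rho$) in the competitor.
\item Evaluate the stability bound $\varphi_\delta-\varphi\le C_1 b(\delta)$ at $z$ and use $\rho_{t_0}\varphi+Kt_0\ge\varphi$. This gives $c\log(\delta/t_0)\le \frac{b(\delta)}{1-b(\delta)}\,(C+V_\theta(z)-\psi_0(z))+K\delta$.
\item Hence $t_0\ge\kappa(z)\delta$, with $\kappa$ bounded below on $U$. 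Monotonicity of $t\mapsto\rho_t\varphi+Kt$ then gives $\rho_{\kappa\delta}\varphi-\varphi\le C_U b(\delta)$ on $U$.
\item Since $\rho_\delta$ is a mean and not a sup, you still need Lemma~\ref{lem:regularization-to-modulus} to reach $|\log d(x,y)|^{-\alpha}$.
\end{itemize}
If $c$ is much smaller than the stability error, $t_0$ is uncontrolled. If $c$ is much larger, the term $c\,\psi_0$ ruins the rate on $U$. So treating $\kappa$ merely as a small nuisance, as in your Step~2, misses exactly this balance.

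\textbf{Two further issues.}

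First, routing stability through a H\"older bound in $L^1(\mu)$ loses in the exponent.
\begin{itemize}
\item From $\sup_X(\psi-\varphi)\le\varepsilon+C\mu(\Omega_\varepsilon)^{\alpha_0}$ with $\alpha_0=\frac{p-n}{pn}$, Chebyshev gives the $L^1(\mu)$-exponent $\frac{\alpha_0}{1+\alpha_0}$.
\item Combined with $\|(\tilde\varphi_\delta-\varphi)_+\|_{L^1(\mu)}\lesssim|\log\delta|^{-p}$, this only reaches exponents below $\frac{p(p-n)}{pn+p-n}<\frac{p-n}{n}$.
\item The paper instead bounds $\mu(\Omega_\varepsilon)\lesssim(-\log\Vol(\Omega_\varepsilon))^{-p}$ directly by H\"older--Young, together with $\Vol(\Omega_\varepsilon)\le\varepsilon^{-1}\|(\psi-\varphi)_+\|_{L^1(dV)}$. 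This yields every $\gamma<\frac{p-n}{n}$.
\end{itemize}

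Second, the comparison in your ``main obstacle'' cannot be a maximum principle on $\textrm{Amp}(\theta)$. None of the functions is smooth, and minimal singularities do not stop the relevant supremum from escaping to the non-ample locus. The paper replaces this with:
\begin{itemize}
\item the envelope $u=P_\theta(\psi-\chi(\psi-\varphi-s))$, with $\chi(t)\approx t^{(n+1)/n}$ convex;
\item the fact that $\theta_u^n$ is carried by the contact set;
\item Lemma~\ref{lemma: ddl511}, applied to $\psi$ minus the inverse of $\chi$ evaluated at $\psi-u$;
\item the domination principle (Proposition~\ref{prop: domination}), which gives $v_s\le u$ for the auxiliary solution $v_s$ (your $\psi$).
\end{itemize}
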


The proof of this theorem relies on the $L^\infty$-$L^1$ stability, which goes back to Guedj and Zeriahi's result~\cite{guedj2012stability}. Our approach consists in showing that the volume of the sublevel sets $(\varphi<\psi-t)$ decreases to zero in finite time by directly measuring their $\mu$-size, which is not based on Monge-Amp\`ere capacities. It relies on weak compactness
of normalized quasi-plurisubharmonic functions, basic properties of quasi-psh envelopes, and the use of auxiliary functions, which allow us to deal with big forms.

 \subsection*{Organization of the paper.}
In Section~\ref{sect: recap}, we recall the necessary preliminaries on pluripotential theory, quasi-psh envelopes, and Demailly's regularization theorem.  Section~\ref{sect: unique} is devoted to the uniqueness of solutions to the complex Monge-Amp\`ere mean field equation, proving Theorem~\ref{thm: unique}. The modulus of continuity of Monge-Amp\`ere potentials is studied in Section~\ref{sect: modulus}. 

 \subsection*{Acknowledgments.} We thank C. H. Lu for helpful discussions.
Q.-T. Dang is supported by  the Shuimu Scholar program of Tsinghua University. L. Zhang is supported by Postdoctoral Fellowship Program of China  GZC20240867.
B. Zhou is partially supported by  National Key R$\&$D Program of China 2023YFA009900 and NSFC  Grant 12271008.

   \subsection*{Declarations} 
   The authors used ChatGPT to improve the presentation of the manuscript and to assist in identifying inconsistencies in notation and minor errors. The authors take full responsibility for all mathematical claims and for any errors that remain.
\section{Preliminaries} \label{sect: recap}
	
	Throughout the paper, we let $X$ denote a compact K\"ahler manifold of dimension $n$, equipped with a K\"ahler form $\omega_X$. 
	We denote by \(dV:={\omega_X^n}/n! \)
	the volume form associated with $\omega_X$. For any measure $\mu$ on $X$, we write $L^1(\mu)$ for $L^1(X,d\mu)$.

	\subsection{Quasi-plurisubharmonic functions}

Recall that an upper semi-continuous function $ \varphi:X \rightarrow\mathbb{R}\cup\{-\infty\} $
	is called {\it quasi-plurisubharmonic} ({\it quasi-psh} for short) if it is locally the sum of a smooth and a plurisubharmonic (psh for short) function. 

    \begin{definition}
        We say that $\varphi$ is {\it $\theta$-plurisubharmonic}  ({\it $\theta$-psh} for short) if it is quasi-psh, and $$\theta_\varphi:=\theta+\ddc \varphi\geq 0$$ in the sense of currents, where ${\rm d}=
	\partial+\Bar{\partial}$ and ${\rm d}^c=\frac{i}{2}(\Bar{\partial}-\partial)$ so that $\ddc={i}\partial\Bar{\partial}$. 
    \end{definition}
	We let $\PSH(X,\theta)$ denote the set of all $\theta$-psh functions, which are not identically $-\infty$. This set is endowed with 
	the weak topology, which coincides  with 
	the $L^1(\omega_X^n)$-topology. By Hartogs' lemma, $\varphi\mapsto\sup_X\varphi$ is continuous in this weak topology,  
	so the set of $\varphi\in\PSH(X,\theta)$, with $\sup_X\varphi=0$ is compact. 
	We refer the reader to~\cite{demaillycomplex,guedj2017degenerate} for more details.

	The cohomology class $\{\theta\}$ is said to be {\em big} if the set $\PSH(X,\theta-\varepsilon\omega_X)$ is not empty for some $\varepsilon>0$.	
	We now assume that $\{\theta\}$ is big unless otherwise specified.
	By Demailly's regularization theorem \cite{demailly1992regularization}, we can find a closed positive $(1,1)$-current $T_0\in \{\theta\}$ such that $$T_0=\theta+\ddc\psi_0\geq \varepsilon_0\omega_X,$$ for some $\varepsilon_0>0$, where $\psi_0$ is a quasi-psh function with \emph{analytic singularities}, i.e., locally 
	$$	\psi_0=c\log\left[\sum_{j=1}^{N}|f_j|^2\right]+g,
	$$
	where $c\in\mathbb{Q}_{>0}$, the $f_j$'s are holomorphic functions, and $g$ is a bounded function. 
	This current $T_0$ is then locally bounded on an open Zariski subset $X\setminus\{\psi_0=-\infty\}$. 
	We thus define the {\em ample locus} $\textrm{Amp}(\theta)$ of $\theta$ as the largest such Zariski open subset, which exists by the Noetherian property of closed analytic subsets; cf.~\cite{boucksom2004divisorial}.

	Given $\varphi,\psi\in \PSH(X,\theta)$, we say that $\varphi$ is {\it less singular} than  $\psi$, and denote by $\psi\preceq\varphi$, if  there exists a constant $C$ such that $\psi\leq \varphi+C$ on $X$. We say that $\varphi,\psi$ have the {\em same singularity type}, and denote by $\varphi\simeq\psi$ if $\varphi\preceq\psi$ and $\psi\preceq \varphi$. 
	There is a
	natural least singular potential in $\PSH(X,\theta)$ given by
	\begin{align*}
		V_{\theta}:=\sup\{\varphi\in \PSH(X,\theta): \varphi\leq 0\}.
	\end{align*}
	A function $\varphi$ is said to have {\em minimal singularities} if it has the same singularity type as $V_\theta$.  In particular, $V_\theta=0$ if and only if $\theta$ is semipositive.
Moreover, $V_\theta$ is locally bounded on ${\rm Amp}(\theta)$.

\begin{theorem} [{\cite{skoda1972sous,tian1987kahler,guedj2017degenerate}}]\label{thm: Skoda/Tian} Assume $\theta\leq A\omega_X$ for fixed $A>0$.
    There exists $\alpha=\alpha(n,A)>0$ such that for all $\varphi\in\PSH(X,\theta)$,
    \begin{equation}\label{eq: alpha}
        \int_Xe^{-\alpha(\varphi-\sup_X\varphi)}\omega_X^n\leq C,
    \end{equation}
    where $C=C(n,A,\alpha)$ is independent of $\varphi$ and $\theta$. Moreover, for all $\varphi\in\PSH(X,\theta)$, 
    \[\int_X(\varphi-\sup_X\varphi)\omega_X^n\geq -C. \]
\end{theorem}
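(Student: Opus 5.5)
Since $\theta\le A\omega_X$, every $\varphi\in\PSH(X,\theta)$ lies in $\PSH(X,A\omega_X)$, and $\sup_X\varphi$ does not depend on which of these two classes we regard $\varphi$ as belonging to. It therefore suffices to prove both estimates for the compact family
\[
\mathcal{F}_A:=\{u\in\PSH(X,A\omega_X):\ \sup_X u=0\}.
\]
This family depends only on $A$ and the fixed manifold $(X,\omega_X)$, so constants obtained for it are automatically independent of $\theta$. Rescaling by $u\mapsto u/A$ identifies $\mathcal{F}_A$ with the normalized $\omega_X$-psh functions. Concretely, if $\alpha_1$ works for $\omega_X$, then $\alpha=\alpha_1/A$ works for $A\omega_X$, with the same constant $C$.

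\emph{The $L^1$ bound.} I would argue by compactness. The set $\mathcal{F}_A$ is compact in $L^1$, as recalled above via Hartogs' lemma; it is nonempty and contains no function identically $-\infty$. The map $u\mapsto\int_X u\,\omega_X^n$ is continuous in this topology and finite on each element, since quasi-psh functions are in $L^1$. Hence it attains a finite minimum $-C$ on $\mathcal{F}_A$. A more explicit route would use the sub-mean value inequality for $\omega_X$-psh functions in coordinate balls, combined with a Harnack-type chaining argument over a finite cover. I would keep the compactness argument as the main one.

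\emph{The uniform exponential integrability (Skoda).} This step is the main obstacle: Skoda's local integrability theorem must be made uniform over the family. I would proceed in three stages.
\begin{enumerate}
\item Cover $X$ by finitely many coordinate balls $B_j\Subset B_j'$. On $B_j'$ write $A\omega_X=dd^c g_j$ with $g_j$ smooth and bounded by a constant depending only on $A$. Then for $u\in\mathcal{F}_A$, each $u+g_j$ is psh on $B_j'$ and satisfies $u+g_j\le C_1$.
\item Obtain a uniform lower control of these local psh functions. By the $L^1$ bound just proved, $\int_{B_j'}(u+g_j)\ge -C_2$. This places the family $\{u+g_j\}$ in a compact set of psh functions on $B_j'$, bounded above and bounded in $L^1$.
\item Apply the uniform Skoda theorem: for a compact family of psh functions on a ball, there exists $\alpha>0$ with $\int_{B_j}e^{-\alpha v}\le C$ uniformly over the family. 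Here $\alpha$ can be taken as any number below $2/\sup\nu$, where $\nu$ denotes Lelong numbers. The Lelong numbers are uniformly bounded: by comparison with the total mass $\int_X A\omega_X\wedge\omega_X^{n-1}$, one gets $\nu(u,x)\le C(n,A)$.
\end{enumerate}
I would then sum the resulting local bounds over the $j$'s, absorbing the bounded terms $e^{\alpha g_j}$. This gives \eqref{eq: alpha} with $\alpha=\alpha(n,A)$ and $C=C(n,A,\alpha)$, with no dependence on $\theta$ because everything was done in $\mathcal{F}_A$.

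Should the uniform Skoda step need justification, I would follow the argument in Guedj--Zeriahi's book. There, uniformity follows from Skoda's integrability theorem together with a compactness argument: $e^{-\alpha v}$ is lower semicontinuous in $v$ along $L^1$-convergent sequences, so a sequence along which the integrals blow up would contradict Skoda's bound at its limit, once $\alpha$ is chosen below the uniform Lelong-number threshold.
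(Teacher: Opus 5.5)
Your reduction to $\{u\in\PSH(X,A\omega_X):\sup_X u=0\}$, the rescaling, the compactness proof of the $L^1$ bound, the local charts and the uniform Lelong number bound are all fine. This is the standard architecture behind the references the paper cites (the paper gives no proof of its own): cover $X$ by balls and invoke the uniform version of Skoda's theorem (Zeriahi; Guedj--Zeriahi's book).

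What fails is your justification of that uniform Skoda step, which you yourself flag as the main obstacle. If $v_k\to v$ in $L^1$, Fatou's lemma only gives $\int e^{-\alpha v}\le\liminf_k\int e^{-\alpha v_k}$. This lower semicontinuity is perfectly compatible with $\int e^{-\alpha v_k}\to+\infty$ while $\int e^{-\alpha v}<+\infty$, so a blow-up sequence contradicts nothing at its limit. To run a compactness argument on the integrals themselves you would need an \emph{upper} bound for $\limsup_k\int e^{-\alpha v_k}$ in terms of $v$. That is the Demailly--Koll\'ar semicontinuity theorem, a deep result, not an elementary fact.

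The uniformity really comes from the fact that Skoda's estimate is quantitative. On a smaller ball, $\int e^{-\alpha v}$ is bounded in terms of three quantities: $\sup v$, $\|v\|_{L^1}$, and an upper bound $m<2/\alpha$, uniform in $x$, for the normalized masses $\nu(v,x,r)=c_n\,r^{2-2n}\sigma_v(B(x,r))$ at one fixed radius $r$. Here $\sigma_v$ is the trace measure of $\ddc v$, and $c_n$ is normalized so that $\log|z|$ has mass $1$. In the uniform Skoda theorem, compactness is applied to these masses, not to the integrals: they are upper semicontinuous in $(v,x)$ for closed balls and decrease to $\nu(v,x)$ as $r\to 0$, so a Dini-type argument yields one radius at which they are uniformly close to $\sup\nu$.

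In your setting you do not even need that step. Since $r\mapsto\nu(v,x,r)$ is nondecreasing, your comparison with the total mass $\int_X(A\omega_X+\ddc u)\wedge\omega_X^{n-1}=A\int_X\omega_X^n$ bounds $\nu(u+g_j,x,r)$ uniformly. The bound holds for all $x\in B_j$ and all $r\le r_0=\mathrm{dist}(B_j,\partial B_j')$, not only in the limit $r\to 0$. That is exactly the hypothesis of the quantitative Skoda lemma, and it yields some $\alpha=\alpha(n,A)>0$; the sharp threshold $2/\sup\nu$ is not needed for the statement. With this replacement, or simply by citing the uniform Skoda theorem as a black box, the proof is complete.
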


    \subsection{Demailly's regularization theorem}

	Let $$\exph\colon TX\to X,\quad \exph_x: T_x X\ni \zeta\mapsto X$$  be  the formal holomorphic part
	of the Taylor expansion of the exponential map of the Chern connection on $TX$ associated with the metric $\omega_X$.

    Following~\cite{demailly1994regularization}, for a quasi-psh function $u$,
	we define its {\em $\delta$-regularization} $\rho_\delta u=\Psi(u)(z,\delta)$ where
	\begin{equation}\label{phie}
		\Psi(u)(z,w)=\int_{\zeta\in T_{z}X}
		u\big(\exph_z(w\zeta)\big)\chi\big({|\zeta|^2_{\omega_X }}\big)\,dV_{\omega_X}(\zeta),\ \delta>0,
	\end{equation}
	where $\chi: \mathbb R_{+}\rightarrow\mathbb R_{+}$ is defined by
	\begin{center}
		$\chi(t)=\begin{cases}\frac {\eta}{(1-t)^2}\exp(\frac 1{t-1})&\ {\rm if}\ 0\leq t\leq 1,\\0&\
			{\rm if}\ t>1,\end{cases}$
	\end{center}
	with a suitable constant $\eta$ such that
	$\int_{\mathbb C^n}\chi(\Vert z\Vert^2)\,dV(z)=1$, where $dV$ denotes the Lebesgue measure on $\mathbb{C}^n$. The $\delta$-regularization $\rho_\delta u$ can be written by
	\[\rho_\delta u(z)=\frac{1}{\delta^{2n}}\int_{\zeta\in T_{z}X}
	u\big(\exph_z(\zeta)\big)\chi\bigg(\frac{|\zeta|^2_{\omega_X }}{\delta^2}\bigg)\,dV_{\omega_X}(\zeta).\]
	Intuitively, this corresponds to the familiar convolution with smoothing kernels. Actually, in the case of $\mathbb{C}^n$ endowed with the Euclidean metric, this is exactly the smoothing convolution; see~
	\cite[Remark 4.6]{demailly1994regularization}.

	The following lemma is a combination of \cite[Theorem 4.1]{demailly1994regularization} and \cite[Lemma 1.12]{berman2012regularity}.
	\begin{lemma}\label{lem kiselman}
		Let $u$ be a $\theta$-psh function and define the Kiselman-Legendre transform with level $c$ by
		\begin{equation}\label{kisleg}
			\Phi_{c,\delta}\coloneqq\inf _{0< t\leq \delta }\Big[\rho_tu(z)+K (t-\delta)
			-c\log\Big(\frac t{\delta}\Big)\Big].
		\end{equation}	
		Then, for  some positive constants $0<\delta_0<1$ and $K>1$  depending on the curvature tensor of $\omega_X$ on $X$,
		the function $\rho_tu(z)+Kt$ is increasing in $t\in (0, \delta_0]$ and
		one has the following estimate for the complex Hessian:
		\begin{equation}\label{hessest}
			\theta+dd^c \Phi_{c,\delta}\geq -(Ac+2K\delta)\,\omega_X,
		\end{equation}
		where $A$ is a lower bound of the negative part of the bisectional curvature of $\omega_X$.
	\end{lemma}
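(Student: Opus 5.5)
The statement is a combination of two known results, so the plan is to assemble it rather than reprove it: Demailly's Kiselman-type regularization from \cite[Theorem 4.1]{demailly1994regularization}, together with the Hessian bookkeeping of \cite[Lemma 1.12]{berman2012regularity}.

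\emph{Monotonicity.} First we show that $t\mapsto\rho_t u+Kt$ is increasing on $(0,\delta_0]$. Write $u$ locally as a psh function plus a smooth function. Consider the function $\Psi(u)(z,w)$ on $X\times\{|w|<\delta_0\}$. Demailly's computation shows that, up to errors controlled by the curvature of $\omega_X$, it is plurisubharmonic in $(z,w)$ after adding a term $K|w|$. Since it is radial in $w$, it is convex in $\log|w|$. We then use the standard convexity argument: the one-variable function $\log t\mapsto\rho_t u+Kt$ is convex and is bounded as $t\to 0^+$, where it tends to $u$. A convex function of $\log t$ that stays bounded as $\log t\to-\infty$ must be nondecreasing, which gives the monotonicity.

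\emph{Hessian bound before minimizing.} Next we estimate the complex Hessian of the function
\[
\Psi_{c,\delta}(z,w)=\Psi(u)(z,w)+K(|w|-\delta)-c\log(|w|/\delta).
\]
This is the core of \cite[Theorem 4.1]{demailly1994regularization}. One expands $\exph_z(w\zeta)$ to second order, and the curvature terms yield
\[
\theta+dd^c_{z,w}\Psi(u)\geq -\bigl(A\lambda(z,|w|)+K|w|\bigr)\omega_X,
\]
where $\lambda(z,t)=t\,\partial_t(\rho_t u+Kt)$ is the Lelong-type slope. The term $-c\log|w|$ contributes nothing away from $w=0$.

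\emph{Kiselman--Legendre infimum.} Finally we apply Kiselman's minimum principle. The function $\Psi_{c,\delta}$ depends only on $|w|$ and is psh in $(z,w)$ up to the curvature error, so the partial infimum over $|w|=t\in(0,\delta]$, namely $\Phi_{c,\delta}$, inherits the same lower Hessian bound evaluated at a minimizer $t_0$. At an interior minimizer we have $\partial_{\log t}(\rho_t u+Kt)=c$, so $\lambda=c$. At the endpoint $t_0=\delta$ we have $\lambda\le c$, using the monotonicity from the first step. The error term $K|w|\le K\delta$ comes from the linear correction; doubling it absorbs the contribution of the term $K(t-\delta)$. Together these give the bound $-(Ac+2K\delta)\omega_X$.

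The main obstacle is the curvature expansion of $\exph$ in the second step. This is the delicate part of Demailly's paper, and we would cite it directly rather than redo it. The rest of the argument is the minimum principle plus elementary convexity.
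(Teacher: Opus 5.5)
Your proposal is correct and takes the same route as the paper. The paper also just invokes the Demailly/Berman--Demailly Kiselman--Legendre regularization, in the form of Ko{\l}odziej--Nguyen's Lemma 4.1, and adds only the remark that boundedness of $u$ is not needed. Two small corrections: in the monotonicity step you need ``bounded above'' rather than ``bounded'', since at points where $u=-\infty$ one has $\rho_t u\to-\infty$ and a convex function of $\log t$ that is bounded above as $t\to0^+$ is still nondecreasing (this is exactly the unbounded case the paper flags); and the Kiselman step with the $(z,w)$-dependent error $A\lambda(z,|w|)$ does not follow from the plain minimum principle, so it belongs to what you are citing from Demailly's Theorem 4.1.
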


\begin{proof} The proof is almost identical to that of~\cite[Lemma 4.1]{kolodziej2019stability} (see also~\cite{DiNezza2024-envelope}), which is still valid without the boundedness of $u$. \end{proof}

    \begin{lemma}\label{lem: Jensen}
		Let $u\in\PSH(X,\theta)$.
		If
		$\rho_{\delta} u$ is the regularization of
		$u$ defined as in (\ref{phie}) then for $\delta$ small enough we have
		$$
		\int_{X} \frac{\rho_{\delta} u-u}{\delta ^2 }\omega_X^n \leq C\|u\|_{L^1(\omega_X^n)} ,
		$$
		where $C$ depends only on $n$, $X$, $\omega_X$.
	\end{lemma}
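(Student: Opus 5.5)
The plan is to avoid Laplacian or Jensen-type formulas, since $\int_X dd^c u\wedge\omega_X^{n-1}=0$ gives no information in terms of $\|u\|_{L^1}$. Instead I would use duality via Fubini: move the regularization from $u$ onto an adjoint kernel and show that this kernel integrates to $1+O(\delta^2)$ uniformly. Then
\[
\int_X(\rho_\delta u-u)\,\omega_X^n=\int_X u\,(K_\delta-1)\,\omega_X^n\le \|K_\delta-1\|_{L^\infty}\,\|u\|_{L^1(\omega_X^n)}\le C\delta^2\|u\|_{L^1(\omega_X^n)} .
\]
This yields exactly the stated bound, with no additive constant, and it is consistent with the case of constant $u$: there $\rho_\delta u=u$ because $\chi$ is normalized, and $K_\delta$ integrates against a constant to give $\int_X K_\delta\,\omega_X^n=\int_X\omega_X^n$.

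\textbf{Step 1 (Fubini).} For $\delta<\delta_0$ smaller than the injectivity-type radius of $\exph$, the map $(z,\zeta)\mapsto (z,y=\exph_z(\zeta))$ is a diffeomorphism from $\{|\zeta|_{\omega_X}<\delta_0\}\subset TX$ onto a neighbourhood of the diagonal. Writing $\zeta=\zeta(z,y)$ and letting $J(z,y)$ denote the Jacobian of $\zeta\mapsto \exph_z(\zeta)$ with respect to $dV_{\omega_X}(\zeta)$ and $\omega_X^n(y)$ (after fixing normalizations), I get
\[
\int_X\rho_\delta u\,\omega_X^n=\int_X u(y)K_\delta(y)\,\omega_X^n(y),\qquad
K_\delta(y)=\int_X \delta^{-2n}\chi\Big(\frac{|\zeta(z,y)|^2}{\delta^2}\Big)J(z,y)^{-1}\,\omega_X^n(z).
\]
Fubini is legitimate because $u\in L^1$ and the kernel is bounded with compact support. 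In particular this step does not use that $u$ is $\theta$-psh.

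\textbf{Step 2 (expansion of $K_\delta$).} This is the main obstacle. Fix $y$ and change variables $z\mapsto w$, where $z=\exph_y(w)$ and $w\in T_yX$ lies in a chart centered at $y$. Since $\exph$ agrees with the exponential map to first order, the following hold.
\begin{itemize}
\item $\zeta(z,y)=-\tau(w)+O(|w|^2)$, where $\tau$ is parallel transport; hence $|\zeta(z,y)|^2=|w|^2+O(|w|^3)$.
\item The combined Jacobian factor equals $1+\ell_y(w)+O(|w|^2)$, with $\ell_y$ linear in $w$ and all constants uniform in $y$ by compactness.
\end{itemize}
Substituting $w=\delta v$ gives
\[
K_\delta(y)=\int_{\mathbb C^n}\chi(|v|^2+O(\delta))\big(1+\delta\,\ell_y(v)+O(\delta^2)\big)\,dV(v).
\]
I must check that the $O(\delta)$ terms cancel. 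The term $\delta\,\ell_y(v)$ is odd in $v$, so it vanishes against the radial weight $\chi(|v|^2)$. The $O(|w|^3)$ correction in the argument of $\chi$ contributes $\delta\,\chi'(|v|^2)\,c_y(v)$ with $c_y$ cubic, hence odd, so it also vanishes. Since $\chi$ is smooth with compact support, Taylor expansion to second order leaves $|K_\delta(y)-1|\le C\delta^2$, using $\int\chi(|v|^2)\,dV=1$. If the cubic term is not purely homogeneous, I will expand one order further, using the smoothness of $\exph$; the parity argument is the essential point.

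\textbf{Step 3 (conclusion).} Combining the two steps,
\[
\int_X(\rho_\delta u-u)\,\omega_X^n\le \sup_X|K_\delta-1|\int_X|u|\,\omega_X^n\le C\delta^2\|u\|_{L^1(\omega_X^n)},
\]
where $C$ depends only on $n$, $X$ and $\omega_X$. Dividing by $\delta^2$ gives the lemma.
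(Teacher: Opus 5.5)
Your proof is correct, but it takes a genuinely different route from the paper. The paper gives no argument of its own. It refers to the proof of \cite[Lemma 2.3]{demailly2014holder}, which, judging by the label the paper gives this lemma, rests on the Jensen formula. That argument expresses the gap between the radial averages of $u$ and $u$ itself through the Laplacian of $u$ on small balls, and then integrates over $X$. Quasi-plurisubharmonicity is what makes that Laplacian a measure of controlled mass. The paper's only added remark is that, since $u$ may be unbounded, the resulting bound has to be expressed through $\|u\|_{L^1}$.

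Your argument instead uses Fubini to move the averaging operator onto the adjoint kernel $K_\delta$. Everything then reduces to the uniform expansion $K_\delta=1+O(\delta^2)$. In that expansion the first-order contributions, namely the linear part of the combined Jacobian and the cubic correction to $|\zeta|^2$, vanish by oddness against the radial weight $\chi(|v|^2)$.

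This route is more elementary and more general:
\begin{itemize}
\item it never uses that $u$ is $\theta$-psh;
\item it gives the two-sided estimate $\bigl|\int_X(\rho_\delta u-u)\,\omega_X^n\bigr|\le C\delta^2\|u\|_{L^1(\omega_X^n)}$ for every $u\in L^1$;
\item it yields exactly the homogeneous form of the statement, with no additive constant.
\end{itemize}
The price is the bookkeeping for $\exph$: its smoothness, the identity $d_\zeta\exph_z(0)=\mathrm{Id}$, the diffeomorphism from a small disc bundle onto a neighbourhood of the diagonal, and uniformity in $y$ by compactness. All of this is standard, and you identify the essential points correctly.

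Your hedge about a possibly non-homogeneous cubic term is unnecessary. A smooth function vanishing to third order at $0$ equals a homogeneous cubic plus $O(|w|^4)$, so the parity cancellation applies as stated. The remaining terms are uniformly $O(\delta^2)$ because $\chi$ is smooth with compact support.
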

    \begin{proof}
        The proof follows verbatim from that of~\cite[Lemma 2.3]{demailly2014holder}. The difference is that since $u$ may be unbounded, the upper bound of the integral depends on the $L^1$-norm of $u$.
    \end{proof}

\subsection{Non-pluripolar product}
	\label{sect: nonpluripolar}
	
	Let $\theta^1,\ldots,\theta^n$ be closed smooth real (1,1) form representing big cohomology classes, and $\varphi_j\in\PSH(X,\theta^j)$. Following the construction of Bedford--Taylor~\cite{bedford1987fine}, it has been shown in~\cite{boucksom2010monge} that for each $k\in\mathbb{N}$,
	\[ \mathbf{1}_{\cap_j\{\varphi_j>V_{\theta^j}-k\}}(\theta^1+\ddc{\max(\varphi_1,V_{\theta^1}-k)})\wedge\cdots\wedge (\theta^n+\ddc{\max(\varphi_n, V_{\theta^n}-k)})\] is well-defined as a  Borel positive measure with finite total mass. The sequence of these measures is non-decreasing in $k$ and it converges weakly to the so-called {\em Monge--Amp\`ere product}, denoted by \[ (\theta^1+\ddc{\varphi_1})\wedge \cdots\wedge (\theta^n+\ddc{\varphi_n}),\] 
	which does not charge pluripolar sets by definition. In particular, $\theta^1=\cdots=\theta^n=\theta$ and $\varphi_1=\cdots=\varphi_n$ we obtain the non-pluripolar
	Monge--Amp\`ere measure of $\varphi$, denoted by $(\theta+\ddc\varphi)^n$ or simply by $\theta_\varphi^n$.
	
	The {\em volume} of a big cohomology class $\{\theta\}$ is given by the total mass of the non-pluripolar Monge--Ampère
	measure of $V_\theta$, i.e.,  $${\rm \Vol}(\theta):=\int_{X}\theta_{V_\theta}^n.$$
	We say that $\varphi\in\PSH(X,\theta)$ has {\it full Monge--Amp\`ere mass} if $\int_X\theta_{\varphi}^n=\Vol(\theta)$. We let \begin{align*}
		\mathcal{E}(X,\theta):=\left\{\varphi\in\PSH(X,\theta):\int_X\theta_{\varphi}^n=\Vol(\theta) \right\}
	\end{align*}
	denote the set of $\theta$-psh functions with full Monge--Amp\`ere mass.
	Note that $\theta$-psh functions with minimal singularities have full  Monge--Amp\`ere mass (see \cite[Theorem 1.16]{boucksom2010monge} for more details), but the converse is not true. 
    \medskip 


 We recall the following classical inequality.
\begin{lemma}\label{lem: maxprin}
	Let $\varphi,\psi\in\PSH(X,\theta)$. Then
	\[\theta_{\max(\varphi,\psi)}^n\geq \mathbf{1}_{\{\psi\leq\varphi \}}\theta_\varphi^n+\mathbf{1}_{\{\varphi<\psi\}}\theta_{\psi}^n. \]
    In particular, if $\varphi\leq\psi$ then $\mathbf{1}_{\{\varphi=\psi\}}\theta^n_\varphi\leq \mathbf{1}_{\{\varphi=\psi\}}\theta^n_\psi$.
\end{lemma}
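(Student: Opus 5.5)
The plan is to reduce to the bounded case by truncation, apply the classical local maximum principle of Bedford--Taylor on plurifine open sets, and then pass to the limit using the monotone definition of the non-pluripolar product recalled in Section~\ref{sect: nonpluripolar}. For $k\in\mathbb{N}$ set $\varphi_k:=\max(\varphi,V_\theta-k)$ and $\psi_k:=\max(\psi,V_\theta-k)$. Both have minimal singularities and are locally bounded on $\mathrm{Amp}(\theta)$. Moreover, $\max(\varphi_k,\psi_k)=\max(\max(\varphi,\psi),V_\theta-k)$.

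\emph{Step 1: the bounded case.} Fix $k$. The set $O_1:=\{\psi_k<\varphi_k\}$ is open in the plurifine topology. On $O_1$ we have $\max(\varphi_k,\psi_k)=\varphi_k$. Locally on $\mathrm{Amp}(\theta)$ all functions are bounded psh plus smooth, so Bedford--Taylor locality in the plurifine topology gives
\[\mathbf{1}_{O_1}\theta^n_{\max(\varphi_k,\psi_k)}=\mathbf{1}_{O_1}\theta^n_{\varphi_k},\]
and the same holds on $\{\varphi_k<\psi_k\}$ with $\psi_k$ in place of $\varphi_k$. The contact set $\{\varphi_k=\psi_k\}$ is handled by the standard trick: apply the strict-inequality identity to $\max(\varphi_k+\varepsilon,\psi_k)$. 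For all but countably many $\varepsilon$, the set $\{\varphi_k+\varepsilon=\psi_k\}$ is not charged by $\theta^n_{\varphi_k}$. Since $\{\psi_k\le\varphi_k\}\subset\{\psi_k<\varphi_k+\varepsilon\}$, we get
\[\theta^n_{\max(\varphi_k+\varepsilon,\psi_k)}\ge \mathbf{1}_{\{\psi_k\le\varphi_k\}}\theta^n_{\varphi_k}+\mathbf{1}_{\{\varphi_k+\varepsilon<\psi_k\}}\theta^n_{\psi_k}.\]
Let $\varepsilon\to0$. The functions decrease to $\max(\varphi_k,\psi_k)$ and are uniformly bounded on compacts of the ample locus, so the Monge--Amp\`ere measures converge weakly there. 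The sets $\{\varphi_k+\varepsilon<\psi_k\}$ increase to $\{\varphi_k<\psi_k\}$. The first term is fixed. This step is the main technical point: the left side converges only weakly, while the right side involves indicator functions of quasi-open sets. I would handle it by testing against $\mathbf{1}_{\{\psi_k\le\varphi_k\}}$ times a continuous cutoff, using lower semicontinuity on the plurifine-open set $\{\psi_k<\varphi_k+\varepsilon\}$ and quasi-continuity. Alternatively, I would cite the known local statement (e.g.\ Demailly's inequality in the bounded case) and just note that masses vanish near $X\setminus\mathrm{Amp}(\theta)$, since they put no mass on pluripolar sets.

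\emph{Step 2: removing the truncation.} On the plurifine open set $\{\varphi>V_\theta-k\}\cap\{\psi>V_\theta-k\}$, the functions $\varphi_k,\psi_k,\max(\varphi_k,\psi_k)$ coincide with $\varphi,\psi,\max(\varphi,\psi)$. Multiply the Step~1 inequality by the indicator of this set. By the definition of the non-pluripolar product, each side converges monotonically as $k\to\infty$ to the corresponding non-pluripolar measure restricted to $\{\varphi>-\infty\}\cap\{\psi>-\infty\}$. The complement is pluripolar, hence not charged by any of these measures, which yields the claimed inequality.

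\emph{The "in particular" part.} If $\varphi\le\psi$, then $\max(\varphi,\psi)=\psi$. Restricting the inequality to $\{\varphi=\psi\}\subset\{\psi\le\varphi\}$ gives $\mathbf{1}_{\{\varphi=\psi\}}\theta^n_\psi\ge\mathbf{1}_{\{\varphi=\psi\}}\theta^n_\varphi$.
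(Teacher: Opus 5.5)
Your proof is correct and follows the standard route; the paper gives no proof of its own and simply cites Darvas--Di Nezza--Lu. The route is: truncate by $V_\theta-k$, apply Bedford--Taylor plurifine locality on $\mathrm{Amp}(\theta)$ to the bounded truncations, and pass to the increasing limit defining the non-pluripolar product, noting that the complement of $\bigcup_k\{\varphi>V_\theta-k\}\cap\{\psi>V_\theta-k\}$ is pluripolar.

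The step you flag as the main difficulty is routine, and it needs neither quasi-continuity nor testing weak limits against indicator functions (weak convergence alone would not justify the latter). You already have equality on $\{\varphi_k<\psi_k\}$ by locality, so drop the second term and keep $\theta^n_{\max(\varphi_k+\varepsilon,\psi_k)}\ge\mathbf{1}_{\{\psi_k\le\varphi_k\}}\theta^n_{\varphi_k}$, whose right-hand side does not depend on $\varepsilon$. Let $\varepsilon\to0$ against nonnegative test functions compactly supported in $\mathrm{Amp}(\theta)$, then restrict to $\{\psi_k\le\varphi_k\}$. Alternatively, note that your right-hand side increases as $\varepsilon\downarrow0$ and conclude by monotone convergence.
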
\begin{proof}
    See e.g.,~{\cite[Lemma 2.5]{darvas2025relative}}.
\end{proof}

 \subsection{Quasi-envelopes}
Given a measurable function $f:X\to\mathbb{R}$, we define the {\em $\theta$-psh envelope} of $f$ by
\begin{equation*}
P_\theta(f):=(\sup\{u\in\PSH(X,\theta): u\leq f\;\text{on}\, X \})^*,
\end{equation*} where the star means that we take the upper semi-continuous regularization. We use the convention that $\sup \varnothing=-\infty$.

We observe that $P_\theta(f)\in\PSH(X,\theta)$ if and only if there exists a function $u\in\PSH(X,\theta)$ that lies below $f$. We also note that $P_\theta(f+C)=P_\theta(f)+C$ for any constant $C$.

By~\cite[Chapter 9]{guedj2017degenerate}, if $f$ is finite on a non-pluripolar set, then $P_\theta(f)$ is a well-defined $\theta$-psh function on $X$. 

  \begin{lemma}[{\cite[Lemma 2.3]{DDP25-singularities}}]\label{lem: quasi-everywhere} Assume that $f$ is a measurable function.
Then
 \[P_\theta(f)=\sup\{u\in\PSH(X,\theta): u\leq f\;\text{ quasi-everywhere in}\, X \}. \]
 \end{lemma}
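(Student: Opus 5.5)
We prove the two inequalities separately. Set
\[
\mathcal{A}:=\{u\in\PSH(X,\theta): u\leq f \text{ on } X\},\qquad \mathcal{B}:=\{u\in\PSH(X,\theta): u\leq f \text{ quasi-everywhere}\},
\]
and write $Q:=\sup \mathcal{B}$, so the claim is $P_\theta(f)=Q$.

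\emph{The inequality $P_\theta(f)\le Q$.} Since $\mathcal{A}\subset\mathcal{B}$, we have $\sup\mathcal{A}\le Q$. It remains to remove the upper semicontinuous regularization. By Choquet's lemma, $(\sup\mathcal{A})^*=(\sup_j u_j)^*$ for some countable family $u_j\in\mathcal{A}$. The negligible set $\{(\sup_j u_j)^*>\sup_j u_j\}$ is pluripolar (Bedford--Taylor, valid for quasi-psh functions). Hence $P_\theta(f)\le f$ quasi-everywhere, so $P_\theta(f)\in\mathcal{B}$, and therefore $P_\theta(f)\le Q$. When $\mathcal{A}=\varnothing$ the bound holds trivially.

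\emph{The inequality $Q\le P_\theta(f)$.} This is the main point. Take $u\in\mathcal{B}$ and let $E:=\{u>f\}$, which is contained in a pluripolar set. We must show $u\le P_\theta(f)$. By Josefson's theorem in its global quasi-psh form, there is $\phi\in\PSH(X,\omega_X)$ with $E\subset\{\phi=-\infty\}$.

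The natural move is to add $\varepsilon\phi$ to $u$, but $\theta$ is merely big, so $u+\varepsilon\phi$ need not be $\theta$-psh. To absorb the $\varepsilon\omega_X$ we use the Kähler current $T_0=\theta+\ddc\psi_0\ge\varepsilon_0\omega_X$, normalised so that $\psi_0\le 0$. For $t\in(0,1)$ define
\[
u_t:=(1-t)u+t\psi_0+\tfrac{t\varepsilon_0}{A}\,\phi .
\]
This is $\theta$-psh, by a convex combination argument combined with $\omega_X$-positivity of $\phi$ (with $A$ chosen so that $\ddc\phi\ge -\omega_X$ scaled appropriately).

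Off $E$ we have $u\le f$. On $E$, $u_t=-\infty\le f$. So $u_t$ almost belongs to $\mathcal{A}$; the difficulty is that off $E$ we only get $u_t\le(1-t)u+t\psi_0\le(1-t)f+0$, not $u_t\le f$.

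\emph{Main obstacle.} The subtle step is to get a genuine pointwise bound $u_t\le f$ everywhere, since $f$ is arbitrary measurable and may be negative or unbounded. First, on the set where $f=+\infty$ nothing needs checking. Otherwise, the convex combination must be arranged so that $u_t\le f$ holds exactly. The fix is to avoid convex combinations against $f$ altogether: use $\max$-type gluing, or better, take $u_t:=u+t(\psi_0+\phi)$-type perturbations only when $\theta$ is positive enough.

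A cleaner route is via the $\theta$-psh functions $v_j:=\max(u+\frac1j\tilde\phi,\,u-j)$. These fail for the same reason. So instead, first reduce to $u\le 0$ and $f$ bounded below by truncating to $\max(f,-M)$, then pass to the limit $M\to\infty$ at the end.

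With $f\ge -M$, take $w_t:=(1-t)u+t(\psi_0+\phi'-M')$ with the constants chosen so that $\psi_0+\phi'-M'\le -M\le f$. Off $E$, $w_t\le(1-t)f+tf=f$. On $E$, $w_t=-\infty$. Hence $w_t\in\mathcal{A}$, and so $w_t\le P_{\theta}(f)$.

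Letting $t\to0$, we get $w_t\to u$ off the pluripolar set $\{\psi_0+\phi=-\infty\}$, so $u\le P_\theta(f)$ quasi-everywhere. Two $\theta$-psh functions that are ordered quasi-everywhere are ordered everywhere, because psh functions are determined by their values off pluripolar sets (via the sub-mean value property). Thus $u\le P_\theta(f)$.

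Finally, we remove the truncation. The identity for $\max(f,-M)$ gives the result for $f$ by the decreasing limit $M\to\infty$, since $P_\theta(\max(f,-M))\searrow P_\theta(f)$ and the $\mathcal{B}$-suprema decrease compatibly.
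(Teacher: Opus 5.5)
\textbf{Gap: the final truncation step is circular, and the statement fails for $f$ unbounded below.}

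Your proof of $P_\theta(f)\le Q$ is correct. So is your argument for $f\ge -M$, which inserts the pole through $(1-t)u+t(\psi_0+\varepsilon_0\phi-M')$; that is the standard argument. The gap is the last step. You assert $P_\theta(\max(f,-M))\searrow P_\theta(f)$ without proof, and this is circular. Let $v$ be the decreasing limit. Your first part only gives $P_\theta(\max(f,-M))\le\max(f,-M)$ quasi-everywhere, hence $v\le f$ quasi-everywhere and $v\le Q$. Since $P_\theta(\max(f,-M))=Q_M\ge Q$, also $v\ge Q$, so $v=Q$. Thus the claim $v=P_\theta(f)$ is literally the statement $Q=P_\theta(f)$ you set out to prove.

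\textbf{Counterexample.} The gap cannot be repaired, because the statement is false for $f$ unbounded below, even when $f$ is quasi-continuous. On $X=\mathbb{P}^1$ take $\theta=\ddc\log(1+|z|^2)$ and $u=\log\frac{|z|^2}{1+|z|^2}$. Then $\theta+\ddc u=\ddc\log|z|^2$ is a Dirac mass at $0$ carrying the whole mass $\int_X\theta$. Choose $x_k\to 0$ with $x_k\neq 0$, and set $f=u$ off $\{0\}\cup\{x_k\}_k$, $f(x_k)=u(x_k)-1$, $f(0)=0$. This $f$ is real-valued and Borel (indeed quasi-continuous), and $\{u>f\}=\{x_k\}_k$ is countable, so $Q\ge u$.

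Now let $w\in\PSH(X,\theta)$ with $w\le f$ everywhere. Then $w\le u$ off a countable set, hence everywhere. So the point mass of $\theta+\ddc w$ at $0$ is at least that of $\theta+\ddc u$, which is already the total mass. Hence $w=u-c$, and $w(x_k)\le f(x_k)$ forces $c\ge 1$. Therefore $P_\theta(f)=u-1\neq u\le Q$, while $P_\theta(\max(f,-M))=\max(u,-M)\searrow u$.

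\textbf{What survives.} The identity holds under a lower bound $f\ge V_\theta-C$. This is all the paper needs: the obstacles $\psi-\chi(\psi-\varphi-s)$ in Lemma~\ref{lem: key} and $b\varphi-(b-1)\psi$ in Theorem~\ref{thm: stability0} are bounded below by $V_\theta-C$. Under this hypothesis your argument works directly, with no truncation. Take $\phi\in\PSH(X,\omega_X)$ with $\phi\le 0$ and $E\subset\{\phi=-\infty\}$, and set $h:=\psi_0-\sup_X\psi_0+\varepsilon_0\phi-C$. Then $h$ is $\theta$-psh, $h\le V_\theta-C\le f$, and $h=-\infty$ on $E$. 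So $(1-t)u+th\le f$ on all of $X$, and letting $t\to 0$ gives $u\le P_\theta(f)$. You should state and prove the lemma under such a hypothesis.
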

  \begin{theorem} \label{thm: envelope}
	Assume that $f$ is quasi-continuous, and $P_\theta(f)\in\PSH(X,\theta)$. Then $\theta_{P_\theta(f)}^n$ is concentrated
	on the contact set $\{P_\theta(f)=f \}$.
\end{theorem}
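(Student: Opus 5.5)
We will prove Theorem~\ref{thm: envelope} in two stages: first for continuous $f$, where a local balayage argument applies, and then for quasi-continuous $f$ by approximation. The statement is local and concerns only the non-pluripolar measure. By definition, $\theta^n_{P_\theta(f)}$ is the increasing limit of the measures $\mathbf{1}_{\{P_\theta(f)>V_\theta-k\}}\theta^n_{\max(P_\theta(f),V_\theta-k)}$. It therefore suffices to show that $\theta^n_{P_\theta(f)}$ puts no mass on the open-in-plurifine sets $\{P_\theta(f)<f-\varepsilon\}\cap\{P_\theta(f)>V_\theta-k\}$ for every $\varepsilon>0$ and $k$.

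\emph{Continuous case.} Assume first that $f$ is continuous and write $u=P_\theta(f)$, which satisfies $u\le f$. Fix a point $x_0$ with $u^*(x_0)<f(x_0)$, and work in the genuine set $\{u<f\}$; note that $u$ is usc and $f$ is continuous, so this set is open. Take a small coordinate ball $B\ni x_0$ and a local potential $g$ with $dd^c g=\theta$ on $B$. Since $u<f-\varepsilon$ on a smaller ball $B'$ and $f$ is continuous, we may take $B'$ small enough that $\operatorname{osc}_{B'}(f+g)<\varepsilon/2$. We then solve the Dirichlet problem (Bedford--Taylor balayage) for the psh function $u+g$ on $B'$, producing $v\ge u+g$ on $B'$ with $v=u+g$ on $\partial B'$ and $(dd^c v)^n=0$ in $B'$. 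This works for unbounded $u$ through the truncations $\max(u,V_\theta-k)$, which are locally bounded on $\mathrm{Amp}(\theta)$; off the ample locus the measure vanishes anyway, since it is non-pluripolar. By the maximum principle, $v\le \sup_{\partial B'}(u+g)\le \sup_{B'}(f+g)-\varepsilon<f+g$ on $B'$. Gluing $v-g$ on $B'$ with $u$ outside gives a $\theta$-psh competitor lying below $f$, so it is $\le u$ and hence equals $u$. Therefore $u+g$ is maximal on $B'$, and $\theta_u^n(B')=0$. Finally, the set $\{u<u^*\}$ is pluripolar and is not charged, so the mass sits on $\{u^*=f\}$.

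\emph{Quasi-continuous case.} Here we use Lemma~\ref{lem: quasi-everywhere} together with approximation. Quasi-continuity gives, for each $j$, an open set $G_j$ of capacity $<2^{-j}$ and continuous functions $f_j$ with $f_j=f$ off $G_j$; we may also arrange $f_j$ to be monotone, for instance decreasing to $f$ quasi-everywhere after replacing $f$ by suitable semicontinuous regularizations. Then $P_\theta(f_j)\searrow P_\theta(f)$, with the identification at the limit supplied by Lemma~\ref{lem: quasi-everywhere}, since the limit lies below $f$ quasi-everywhere. The continuous case shows that $\theta^n_{P_\theta(f_j)}$ is supported on $\{P_\theta(f_j)=f_j\}$. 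To pass to the limit we would test against the functions $\min(f_j-P_\theta(f_j),\varepsilon)\ge0$, which integrate to zero, and use continuity of the non-pluripolar product along decreasing sequences. When the singularity types are fixed (minimal, or at least full mass) this continuity holds, and the quasi-continuity of $f$ and of the psh functions makes the product of these integrands converge. This yields $\int \min(f-P_\theta(f),\varepsilon)\,\theta^n_{P_\theta(f)}=0$.

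The main obstacle is this limiting step. Convergence of non-pluripolar Monge--Ampère measures along decreasing sequences requires a mass condition, and we do not know that $P_\theta(f_j)$ have full mass. The approach we would try first is to work on the plurifine open sets $\{P_\theta(f)>V_\theta-k\}$, where the plurifine locality of the non-pluripolar product lets us replace every function by its truncation at $V_\theta-k$. Weak convergence can then be justified by Bedford--Taylor convergence of locally bounded functions on $\mathrm{Amp}(\theta)$, together with the uniform capacity control on $G_j$ needed to handle the quasi-continuous integrand.
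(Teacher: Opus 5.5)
\textbf{There is a genuine gap in the quasi-continuous case.}

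Your continuous case is the standard balayage argument and is fine. For continuous $f$ one even has $P_\theta(f)\ge V_\theta-\max_X|f|$, so no truncation is needed there. The gap is the passage to quasi-continuous $f$, which is the actual content of the statement. The paper defers it to \cite[Theorem 2.2]{darvas2025relative} together with Lemma~\ref{lem: quasi-everywhere}, and your write-up leaves it open in two places.

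\emph{The approximation you invoke does not exist in general.} A decreasing limit of continuous functions is upper semicontinuous, and a quasi-continuous function need not agree quasi-everywhere with any usc function. For instance, let $v=\max\bigl(\sum_k 2^{-k}\log|z-a_k|,-N\bigr)$ on a ball, with $(a_k)$ dense and $N$ large. Then $v\equiv -N$ on small balls around every $a_k$. So $-v$ is quasi-continuous, but no change on a pluripolar set makes it usc at any point of the non-pluripolar set $\{v>-N\}$. The functions the paper feeds into this theorem, such as $b\varphi-(b-1)\psi$, are differences of quasi-psh functions of exactly this kind. Replacing $f$ by a semicontinuous regularization changes it on a non-pluripolar set, hence changes both $P_\theta(f)$ and the contact set. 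Non-monotone Tietze approximations give no comparison between $P_\theta(f_j)$ and $P_\theta(f)$.

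\emph{The limiting step is not justified even if $f_j\searrow f$.} The integrands $\min(f_j-P_\theta(f_j),\varepsilon)$ are not monotone. You would therefore need genuine convergence of $\int\chi_j\,\theta^n_{P_\theta(f_j)}$, which, as you note yourself, you cannot justify without mass control.

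\textbf{How to close the gap.}

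\emph{Construction.} Replace $f$ by $\min(f,M)$ with $M\ge\max(0,\sup_X P_\theta(f))$; this changes neither the envelope nor $\{P_\theta(f)<f\}$. Take decreasing open sets $G_j$ with $\mathrm{Cap}(G_j)\to 0$ such that $f|_{X\setminus G_j}$ is continuous. Approximate from \emph{above} by the lower semicontinuous functions $f_j:=\max(f,-j)$ on $X\setminus G_j$ and $f_j:=M$ on $G_j$. Then:
\begin{itemize}
\item the $f_j$ decrease to $f$ off the pluripolar set $\bigcap_j G_j$;
\item $u_j:=P_\theta(f_j)\ge V_\theta-j$ has minimal singularities;
\item your balayage argument works verbatim, because $\{u_j<f_j\}$ is open when $f_j$ is lsc (the glued competitor is only $\le f_j$ quasi-everywhere, which is where Lemma~\ref{lem: quasi-everywhere} is needed);
\item $u_j\searrow u:=P_\theta(f)$, again by Lemma~\ref{lem: quasi-everywhere}.
\end{itemize}

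\emph{Limit.} Use monotonicity rather than convergence of integrands. For $m\le j$ one has $\chi_m:=\min((f-u_m)_+,1)\le\min((f_j-u_j)_+,1)$, so $\int_X\chi_m\,\theta^n_{u_j}=0$. Since $u_j\to u$ in capacity and $\chi_m\ge 0$ is bounded and quasi-continuous, you can use the lower semicontinuity of non-pluripolar products \cite{darvas2018monotonicity}, which needs no mass condition:
$\int_X\chi_m\,\theta^n_u\le\liminf_j\int_X\chi_m\,\theta^n_{u_j}$.
This gives $\int_X\chi_m\,\theta^n_u=0$. Letting $m\to\infty$, with $\chi_m\nearrow\min((f-u)_+,1)$, yields $\theta^n_u(\{u<f\})=0$. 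In particular, the plurifine truncation at $V_\theta-k$ you propose is not needed.
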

\begin{proof}
 The proof proceeds along the same lines as in \cite[Theorem 2.2]{darvas2025relative}, where Lemma~\ref{lem: quasi-everywhere} is used. For this reason, we omit the details.
\end{proof}

\begin{lemma}[{\cite[Lemma 5.11]{darvas2025relative}}]\label{lemma: ddl511}
     Let $u\in\mathcal{E}(X,\theta)$ be such that $u\leq \phi$. Let $\gamma:\mathbb{R}^+\rightarrow\mathbb{R}^+\cup\{+\infty\}$ be an increasing concave continuous function with $\gamma'\leq 1$. Then $v:=-\gamma(\phi-u)+\phi\in\PSH(X,\theta)$ and 
     \[ \theta_v^n\geq (\gamma'(\phi-u))^n\theta_u^n.\]
\end{lemma}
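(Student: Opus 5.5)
This is the lemma of Darvas--Di Nezza--Lu (Lemma~\ref{lemma: ddl511}). The plan is to reduce to the case where $u$ has minimal singularities, compute with Bedford--Taylor calculus on plurifine open sets, and pass to the limit.

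\textbf{Step 1: $v$ is $\theta$-psh.} Assume first that $u$ and $\phi$ are smooth where needed, or more generally locally bounded on an open set. On the region where $\phi-u$ is finite and $\gamma$ is smooth,
\[
\theta+\ddc v=(1-\gamma'(\phi-u))\,(\theta+\ddc\phi)+\gamma'(\phi-u)\,(\theta+\ddc u)-\gamma''(\phi-u)\,d(\phi-u)\wedge d^c(\phi-u).
\]
The three terms are handled as follows:
\begin{itemize}
\item the first two coefficients lie in $[0,1]$, because $0\le\gamma'\le1$;
\item the last term is nonnegative, because $\gamma$ is concave.
\end{itemize}
Hence $\theta_v\ge0$. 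For general $u,\phi$ (and general concave $\gamma$, which is an infimum of affine functions) I would get the same conclusion by an approximation argument. One option is to write $\gamma(s)=\inf_a(\gamma(a)+\gamma'(a)(s-a))$. Then
\[
-\gamma(\phi-u)+\phi=\sup_a\bigl[(1-\gamma'(a))\phi+\gamma'(a)u-\gamma(a)+a\gamma'(a)\bigr].
\]
This is a supremum of $\theta$-psh functions (convex combinations of $\phi$ and $u$), and it is bounded above by $\phi$. Its usc regularization is therefore $\theta$-psh, and it coincides with $v$ because $v$ is already usc: $\gamma$ is continuous increasing and $\phi-u$ is lsc where $u$ is usc. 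Here I use that $\phi$ is a $\theta$-psh function with $u\le\phi$, and I would treat the value $+\infty$ of $\gamma$ on the set $\{u=-\infty\}$ as pluripolar and negligible.

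\textbf{Step 2: the measure inequality.} By the definition of the non-pluripolar product, it suffices to work on the plurifine open sets $O_k=\{u>V_\theta-k\}\cap\{\phi>V_\theta-k\}$. On these sets all the functions are locally bounded in $\mathrm{Amp}(\theta)$, and by locality of the non-pluripolar product in the plurifine topology the Bedford--Taylor calculus applies. Expand $\theta_v^n$ using the identity above; the $\gamma''$ term is dropped as nonnegative. Every mixed term $\theta_\phi^j\wedge\theta_u^{n-j}$ is a positive measure, so keeping only the $j=0$ term gives
\[
\theta_v^n\ge(\gamma'(\phi-u))^n\,\theta_u^n \quad\text{on } O_k.
\]
Letting $k\to\infty$ gives the inequality on $X$ minus a pluripolar set. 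Neither side charges pluripolar sets, so the inequality holds on all of $X$.

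\textbf{Main obstacle.} The expansion with the $d(\phi-u)\wedge d^c(\phi-u)$ term is not directly meaningful for unbounded, non-smooth functions. I would first try to avoid it through the supremum representation. For piecewise-affine concave $\gamma$, namely a finite infimum of affine maps, $v$ is a finite maximum of the functions $(1-a_i)\phi+a_iu+c_i$. Lemma~\ref{lem: maxprin}, iterated, together with the multilinear lower bound $\theta^n_{(1-a)\phi+au}\ge a^n\theta_u^n$, gives $\theta_v^n\ge\sum_i\mathbf 1_{E_i}a_i^n\theta_u^n$, where $E_i$ is the set on which the $i$-th piece is active. On $E_i$ we have $a_i\ge\gamma'(\phi-u)$ in the one-sided derivative sense. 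Approximating a general $\gamma$ by an increasing sequence of such functions, and using continuity of non-pluripolar products along monotone sequences with full mass (this is where $u\in\mathcal E(X,\theta)$ is used), yields the result.
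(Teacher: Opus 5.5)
Your final argument, in the last paragraph, linearizes $\gamma$ instead of the potentials. It is correct, and it is a genuinely different route from the usual one. The paper gives no proof and only quotes Darvas--Di Nezza--Lu. The usual proof of such statements goes through the identity in your first version of Step~2,
\[
\theta+\ddc v=(1-\gamma'(\phi-u))\theta_\phi+\gamma'(\phi-u)\theta_u-\gamma''(\phi-u)\,d(\phi-u)\wedge d^c(\phi-u),
\]
for smooth $\gamma$ and locally bounded potentials. That identity has to be justified by local smoothing and convergence in capacity; the current inequality $\theta_v\ge\gamma'(\phi-u)\theta_u$ alone does not give the $n$-th power inequality, since its right-hand side is not closed. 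It is then combined with plurifine locality on $\{u>V_\theta-k\}$ and a monotone limit.

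In your route, for $\gamma_j=\min_i(a_is+b_i)$ with $a_i\in[0,1]$ one has exactly $v_j=\max_i\bigl((1-a_i)\phi+a_iu-b_i\bigr)$. Iterating Lemma~\ref{lem: maxprin} and using multilinearity of the non-pluripolar product then gives $\theta_{v_j}^n\ge\sum_i\mathbf{1}_{E_i}a_i^n\theta_u^n$ directly on $X$. This avoids reduction to bounded potentials, avoids regularization, never needs a meaning for the $\gamma''$ term, and makes $\theta$-plurisubharmonicity automatic. The price is a final limiting step. You also get the inequality with the right derivative $\gamma'_+$ at kinks of $\gamma$, which suffices here because $\gamma$ is smooth in Lemma~\ref{lem: key}.

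A few points need repair.
\begin{itemize}
\item In Step~1, $\phi-u$ is not lower semicontinuous, since $\phi$ is only usc. The correct reason $v$ is usc is that $v=F(\phi,u)$ with $F(a,b)=a-\gamma(a-b)$ continuous and nondecreasing in each variable, which is exactly the condition $0\le\gamma'\le1$.
\item You cannot discard $\{u=-\infty\}$ when you claim $v\in\PSH(X,\theta)$ pointwise. If $\gamma$ is bounded with $\gamma'>0$ (e.g.\ $\gamma(s)=1-e^{-s}$), your supremum is $-\infty$ there while $v=\phi-\gamma(+\infty)$.
\item The clean fix is to use increasing approximants already in Step~1. Chord interpolation of $\gamma$ on refining partitions, continued by a constant after the last node, gives piecewise-affine concave $\gamma_j\uparrow\gamma$ on $[0,+\infty]$ with slopes in $[0,1]$. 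Then $v_j\downarrow v$ everywhere, so $v$ is $\theta$-psh, and $v\not\equiv-\infty$ because $v\ge u-\gamma(0)$.
\end{itemize}

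For the limit, make three things explicit.
\begin{itemize}
\item On $E_i$ one has $a_i\ge(\gamma_j)'_+(\phi-u)$, because the $i$-th line dominates $\gamma_j$ and touches it at $\phi-u$.
\item $\liminf_j(\gamma_j)'_+(s)\ge\gamma'_+(s)$, which follows from $(\gamma_j)'_+(s)\ge h^{-1}(\gamma_j(s+h)-\gamma_j(s))$ for $h>0$.
\item $v_j,v\ge u-\gamma(0)$, so these functions lie in $\mathcal{E}(X,\theta)$ by monotonicity of masses, and hence $\theta_{v_j}^n\to\theta_v^n$ weakly.
\end{itemize}
Fatou's lemma against nonnegative continuous test functions then gives $\theta_v^n\ge(\gamma'_+(\phi-u))^n\theta_u^n$. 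The first version of Step~2 should simply be dropped.
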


We recall for the later use the following domination principle. 
    \begin{proposition}[Domination principle] \label{prop: domination}
       Let $u,v$ be $\theta$-psh functions such that $u\in\mathcal{E}(X,\theta)$. If 
       \[\theta_u^n(\{u<v \})\leq c\theta_v^n(\{u<v\}) \] for some $c\in[0,1)$, then $u\geq v$.
   \end{proposition}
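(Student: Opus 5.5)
The plan is to argue by contradiction, using the comparison principle for full-mass potentials together with a small perturbation that turns the inequality $\theta_u^n\le c\,\theta_v^n$ on $\{u<v\}$ into a statement about mass. We may assume $v\in\mathcal{E}$ as well, or at least that $v$ is controlled. If not, we replace $v$ by $\max(u,v)$, which lies in $\mathcal{E}(X,\theta)$ because it is less singular than $u$ and $\mathcal{E}$ is stable under taking more regular potentials. On $\{u<v\}$ the measures $\theta_{\max(u,v)}^n$ and $\theta_v^n$ agree by plurifine locality; this follows from Lemma~\ref{lem: maxprin}, which gives the inequality needed.

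Suppose $\{u<v\}$ is non-empty. Then it has positive Lebesgue measure, since it is plurifine open and non-pluripolar (it is the difference of quasi-psh functions). Fix $t\in(0,1)$, to be chosen close to $1$, and set $w_t:=tv+(1-t)V_\theta-s$ for a small constant $s>0$ chosen so that $\{u<w_t\}$ is still non-empty. The key step is the comparison principle for $u\in\mathcal{E}(X,\theta)$ and $w_t$ (which is more singular than nothing worse than $v$, so it also sits in the right class):
\[
\int_{\{u<w_t\}}\theta_{w_t}^n\le\int_{\{u<w_t\}}\theta_u^n .
\]
This is the standard big-class comparison principle of Boucksom--Eyssidieux--Guedj--Zeriahi, obtained from Lemma~\ref{lem: maxprin} and mass conservation in $\mathcal{E}$.

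Expanding $\theta_{w_t}^n\ge t^n\theta_v^n$ and using $\{u<w_t\}\subset\{u<v\}$ together with the hypothesis, we get
\[
t^n\theta_v^n(\{u<w_t\})\le\theta_u^n(\{u<w_t\})\le c\,\theta_v^n(\{u<w_t\}),
\]
where the last inequality restricts the hypothesis to a subset. Since the hypothesis is only on the whole set, I would instead impose it via $\mathbf 1_{\{u<v\}}\theta_u^n\le c\mathbf 1_{\{u<v\}}\theta_v^n$ as measures, which is the usual reading; otherwise one must apply the argument with sets matched exactly. Choosing $t^n>c$ then gives $\theta_v^n(\{u<w_t\})=0$, and hence also $\theta_u^n(\{u<w_t\})=0$. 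Also $\theta_{w_t}^n\ge(1-t)^n\theta_{V_\theta}^n$, and $\theta_{V_\theta}^n$ charges every non-empty plurifine open subset of the ample locus where $V_\theta$ is controlled. This uses that $\theta_{V_\theta}^n=\mathbf 1_{\{V_\theta=0\}}\theta^n$ is positive on a set of positive measure; the cleaner route is to use $\theta_{\psi_0}$ from the Kähler current, i.e.\ replace $V_\theta$ by $\psi_0$, so that $\theta_{w_t}^n\ge(1-t)^n\varepsilon_0^n\omega_X^n$ on the ample locus. Then the comparison inequality forces $\omega_X^n(\{u<w_t\})=0$, hence $\{u<w_t\}=\emptyset$ since it is plurifine open and non-pluripolar.

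Finally, letting $s\to0$ and $t\to1$ gives $u\ge v$ outside a pluripolar set, and therefore everywhere, since both are quasi-psh. The main obstacle is the comparison principle when $w_t$ does not have full mass; there I would use the version for $u\in\mathcal{E}$ and arbitrary $w$ obtained by passing through $\max(u,w)$ with the cutoff $\max(\cdot,V_\theta-k)$, letting $k\to\infty$ and using monotone convergence of non-pluripolar measures.
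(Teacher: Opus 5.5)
There are two gaps. The second is fatal as stated.

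\textbf{The hypothesis is changed.} The proposition assumes only an inequality of masses, $\theta_u^n(\{u<v\})\le c\,\theta_v^n(\{u<v\})$. Your chain uses it on the subset $\{u<w_t\}$, so as written you prove only the version with an inequality of measures. This is easy to repair, and the repair also removes your ``main obstacle''. For $u\in\mathcal{E}(X,\theta)$ and any $w\in\PSH(X,\theta)$ one has $\max(u,w)\in\mathcal{E}(X,\theta)$. Integrating Lemma~\ref{lem: maxprin} and comparing total masses gives $\theta_w^n(\{u<w\})\le\theta_u^n(\{u<w\})$ directly, with no truncation. Taking $w=v$ yields $\theta_v^n(\{u<v\})\le c\,\theta_v^n(\{u<v\})$. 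Hence $\theta_v^n(\{u<v\})=\theta_u^n(\{u<v\})=0$. This reduces everything to $c=0$, and every subset of $\{u<v\}$ is then $\theta_u^n$-null.

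\textbf{The containment $\{u<w_t\}\subset\{u<v\}$ fails.} It requires $w_t\le v$, i.e.\ $(1-t)(\psi_0-v)\le s$ on $X$. For $t$ near $1$ this holds only if $\psi_0-v$ is bounded above, for instance if $v$ has minimal singularities. Replacing $v$ by $\max(u,v)$ only puts $v$ in $\mathcal{E}(X,\theta)$, and full-mass potentials can be unbounded inside $\mathrm{Amp}(\theta)$.

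Here is a concrete counterexample to your intermediate step. Let $\theta$ be K\"ahler, so you may take $\psi_0=0$, and let $u=v\in\mathcal{E}(X,\theta)$ be unbounded below. The hypothesis holds trivially. Yet $\{u<w_t\}=\{v<-s/(1-t)\}\neq\emptyset$ for every $s>0$ and $t<1$, so the claimed conclusion $\{u<w_t\}=\emptyset$ is false. Your scheme therefore proves the statement only when $v$ is less singular than $\psi_0$.

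For arbitrary $v$ you need a mechanism that avoids a competitor $w\le v$ whose Monge--Amp\`ere measure dominates a volume form. One route uses only tools already in the paper.
\begin{itemize}
\item After the reduction to $c=0$, replace $v$ by $\max(u,v)$ so that $u\le v$, and normalize $\sup_X v=0$.
\item For $b>1$ set $h_b:=P_\theta(bu-(b-1)v)$. It lies in $\mathcal{E}(X,\theta)$, since it dominates $P_\theta(bu-(b-1)V_\theta)$, which has full mass by the result of \cite{darvas2025relative} invoked in the proof of Theorem~\ref{thm: stability0}.
\item Theorem~\ref{thm: envelope} and Lemma~\ref{lem: maxprin}, applied to $b^{-1}h_b+(1-b^{-1})v\le u$ (with equality on the contact set), give $\theta_{h_b}^n\le b^n\mathbf{1}_{\{h_b=bu-(b-1)v\}}\theta_u^n$. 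So $\theta_{h_b}^n$ is carried by $\{u=v\}\cap\{h_b=bu-(b-1)v\}\subset\{h_b=u\}$.
\item Since $h_b\le u$, the comparison principle shows that $\theta_u^n$ is carried by $\{h_b=u\}\subset\{u\le V_\theta+\sup_X h_b\}$.
\item Because $\theta_u^n(\{u\le V_\theta-M\})\to0$ as $M\to\infty$ ($\theta_u^n$ charges no pluripolar set), this bounds $\sup_X h_b$ from below independently of $b$.
\item Finally $u\ge b^{-1}h_b+(1-b^{-1})v$, and $b^{-1}(h_b-\sup_X h_b)\to0$ in $L^1$ by the uniform $L^1$ bound of Theorem~\ref{thm: Skoda/Tian}. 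Letting $b\to\infty$ gives $u\ge v$ almost everywhere, hence everywhere.
\end{itemize}
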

   \begin{proof}
       The proof is close to that of~\cite[Theorem 3.4]{darvas2025relative}. For precise a argument, we refer to~\cite[Proposition 2.11]{DangZhangZhou26-estimate-big}.
   \end{proof}
\begin{proposition}[{\cite[Proposition 4.24]{darvas2018monotonicity}}]\label{prop: unique}
  Let $u,v\in\mathcal{E}(X,\theta)$. If $e^{-\lambda v}\theta_v^n\geq e^{-\lambda u}\theta_u^n$ for some $\lambda>0$, then $u\geq v$.    
   \end{proposition}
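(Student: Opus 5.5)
The plan is the standard comparison argument on a sublevel set, combined with the domination principle (Proposition~\ref{prop: domination}). Suppose first that $e^{-\lambda v}\theta_v^n\geq e^{-\lambda u}\theta_u^n$ and, aiming for a contradiction, that $\{u<v\}$ is not pluripolar-negligible. For $\varepsilon\in(0,1)$ I would replace $v$ by a perturbation that is still dominated in the sense of Proposition~\ref{prop: domination}. The natural choice is
\[
v_\varepsilon:=(1-\varepsilon)v+\varepsilon V_\theta-C_\varepsilon ,
\]
or simply $v$ itself with a small contraction. The strict factor $c<1$ should then come from the exponential: on $\{u<v\}$ we have $e^{-\lambda u}>e^{-\lambda v}$, hence
\[
\theta_u^n\leq e^{\lambda(u-v)}\theta_v^n\leq \theta_v^n \quad\text{on } \{u<v\}.
\]
This only gives $c=1$, so the refinement is to use the sets $\{u<v-t\}$ with $t>0$. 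On these sets
\[
\theta_u^n\leq e^{-\lambda t}\theta_v^n=c\,\theta_{v-t}^n, \qquad c=e^{-\lambda t}<1 .
\]
Applying Proposition~\ref{prop: domination} to the pair $u$ and $v-t$ (note $\theta_{v-t}^n=\theta_v^n$, and $u\in\mathcal{E}(X,\theta)$) gives $u\geq v-t$ for every $t>0$.

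Letting $t\downarrow 0$ yields $u\geq v$ everywhere, since a pointwise inequality off a set that increases to the whole space as $t\to 0$ is an honest pointwise statement: $\{u<v\}=\bigcup_{t>0}\{u<v-t\}$. The hypothesis that both functions lie in $\mathcal{E}(X,\theta)$ is used only through the domination principle, which needs $u\in\mathcal{E}$. The full-mass assumption on $v$ ensures that the comparison of measures is between measures of equal total mass, although the domination principle as stated does not even require this.

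The main point to check is that Proposition~\ref{prop: domination} applies with $v-t$ in place of $v$. The condition to verify is
\[
\theta_u^n(\{u<v-t\})\leq e^{-\lambda t}\,\theta_v^n(\{u<v-t\}),
\]
which follows by integrating the pointwise density inequality over the Borel set $\{u<v-t\}$. This requires the inequality $e^{-\lambda v}\theta_v^n\geq e^{-\lambda u}\theta_u^n$ to be read as an inequality of measures. Since these measures put no mass on pluripolar sets, where $u$ or $v$ could be $-\infty$, the multiplication by $e^{\lambda u}$ restricted to the set is legitimate. Once this is granted, the rest is immediate.
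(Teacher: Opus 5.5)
Your proof is correct. On $\{u<v-t\}$ the hypothesis gives $\theta_u^n\le e^{\lambda(u-v)}\theta_v^n\le e^{-\lambda t}\theta_{v-t}^n$. Multiplying the measure inequality by $e^{\lambda u}\mathbf{1}_{\{u<v-t\}}$ is legitimate, because $\{u=-\infty\}$ is $\theta_u^n$-null and $v>-\infty$ on that set. Proposition~\ref{prop: domination} with $c=e^{-\lambda t}<1$ then gives $u\ge v-t$ everywhere, and letting $t\downarrow 0$ finishes.

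The paper gives no proof of its own; it only cites Darvas--Di Nezza--Lu. The usual argument for this kind of statement works directly at level $t=0$. One first gets the comparison principle $\theta_v^n(\{u<v\})\le\theta_u^n(\{u<v\})$, which follows from Lemma~\ref{lem: maxprin} together with $\max(u,v)\in\mathcal{E}(X,\theta)$. Combined with the hypothesis this gives $\theta_v^n(\{u<v\})\le\int_{\{u<v\}}e^{\lambda(u-v)}\,\theta_v^n$. Hence $\int_{\{u<v\}}(1-e^{\lambda(u-v)})\,\theta_v^n\le 0$, which forces $\theta_v^n(\{u<v\})=0$ and then $\theta_u^n(\{u<v\})=0$. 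The $c=0$ domination principle concludes.

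Your shift $v\mapsto v-t$ produces a \emph{uniform} constant $c<1$. That lets you use the stronger domination principle, as stated in the paper, as a black box; the comparison principle is hidden inside its proof. The cost is only a trivial limit $t\to 0$. The standard route instead uses the pointwise strict inequality $e^{\lambda(u-v)}<1$ on $\{u<v\}$ directly and needs only the weaker $c=0$ domination principle. As you correctly note, both routes use only $u\in\mathcal{E}(X,\theta)$; the full-mass assumption on $v$ is never needed.

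Two minor presentation points:
\begin{itemize}
\item Drop your opening paragraph. The contradiction framing and the perturbation $(1-\varepsilon)v+\varepsilon V_\theta-C_\varepsilon$ play no role in the argument.
\item The final step needs no ``exhaustion'' reasoning, since $u\ge v-t$ holds everywhere for each fixed $t>0$.
\end{itemize}
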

   
\subsection{Orlicz space}
We recall some background about the Orlicz space; cf.~\cite{rao2002applications}. 
\begin{definition}
    A {\em Young} function is a convex increasing lower semicontinuous function $w:\mathbb{R}^+\rightarrow\mathbb{R}^+$ such that $w(0)=0$, $\displaystyle \lim_{t\to0}\frac{w(t)}{t}=0$ and $\displaystyle \lim_{t\to+\infty}\frac{w(t)}{t}=+\infty$. 
    Its {conjugate function} is the Legendre transform of $w(\cdot)$, i.e.,
    \[ w^*(t)=\sup_{s\geq 0}(st-w(s)).\]
\end{definition}

\begin{definition}
    Let $\mu$ be a positive measure on $X$ and let $w$ be a Young function. The {\em Orlicz space} $L^w(X,d\mu)$ is the set of all measurable functions $f$ on $X$ such that
    \[\int_X w(\varepsilon f)d\mu<+\infty \]
    for some $\varepsilon>0$. The {\em Luxembourg norm} of $f\in L^w(\mu)$ is 
    \[ \|f\|_{L^w(\mu)}:=\inf\left\{r>0:\int_X w(r^{-1}f)d\mu\leq 1 \right\}.\]
\end{definition}
We observe that $cw(t/c)\leq w(t)$ for $c\geq 1$, so if $\int_Xw(f)d\mu\leq c$ then $\|f\|_{L^w(\mu)}\leq c$. When the Young function $w(t)=t^p$ for $p\geq 1$ and $\mu=dV$ is a smooth volume form on $X$, we simply write $\|f\|_p$.

\begin{definition} \label{conditionK}
   \hypertarget{conditionK}{We say that a Young function $w$ satisfies {\em condition (K)} if there exists an increasing function $h:(0,+\infty)\to(1,+\infty)$ such that
    \[w(t)=t h(\log(1+t)), \qquad\int_1^{+\infty}h^{-1/n}(t) dt<+\infty.\]}
\end{definition}
\begin{example} We have several examples of weights satisfying condition (K):
 \begin{itemize}
        \item $w(t)=t^p$, for some $p>1$;
        \item $w(t)=t(\log(1+t))^{p}$, for some $p>n$;
        \item $w(t)=t(\log(1+t))^n(\log(1+\log(1+t)))^p$, for some $p>n$.
    \end{itemize}
\end{example}
\begin{proposition}[H\"older-Young inequality] \label{prop: HY}
Let $(\Omega,d\mu)$ be a complete measure space with $\mu(\Omega)>0$. Then for $f\in L^w(\Omega,d\mu)$, we have
    \[\int_{\Omega}fd\mu\leq \frac{2\|f\|_{L^w(\Omega,d\mu)}}{(w^*)^{-1}(1/\mu(\Omega))}, \]
    where $(w^*)^{-1}$ is the inverse function of $w^*$.
\end{proposition}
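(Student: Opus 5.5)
The plan is to reduce to the classical Young inequality by a scaling argument. Set $N:=\|f\|_{L^w(\Omega,d\mu)}$. If $N=0$, then $f=0$ $\mu$-a.e. and there is nothing to prove. Otherwise we argue for each $r>N$ and let $r\downarrow N$ at the end. The strategy is to split the constant function $1$ from $f/r$ using the pointwise Young inequality and an adjustable parameter $s$, which we will choose at the end.

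First, for any $r>N$, the definition of the Luxembourg norm, together with the fact that $\int_\Omega w(f/r)\,d\mu$ is nonincreasing in $r$ ($w$ is increasing), gives $\int_\Omega w(|f|/r)\,d\mu\le 1$. We may assume $f\ge0$, since $\int f\le\int|f|$.

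Second, for any $s>0$ apply the pointwise Young inequality $ab\le w(a)+w^*(b)$, which follows directly from the definition of the Legendre transform, with $a=f/r$ and $b=s$:
\[ \frac{s f}{r}\le w(f/r)+w^*(s). \]
Integrating over $\Omega$ gives
\[ \frac{s}{r}\int_\Omega f\,d\mu\le 1+w^*(s)\mu(\Omega). \]

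Third, choose $s$ so that $w^*(s)\mu(\Omega)=1$, that is, $s=(w^*)^{-1}(1/\mu(\Omega))$. This gives $\int_\Omega f\,d\mu\le 2r/s$, and letting $r\downarrow N$ yields the claimed bound.

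The only delicate point is making sense of $(w^*)^{-1}$. We check that $w^*$ is a finite, continuous, increasing bijection of $[0,\infty)$ onto $[0,\infty)$.
\begin{itemize}
\item Finiteness: $w(s)/s\to\infty$ makes the supremum defining $w^*(t)$ finite.
\item Vanishing at zero: $w\ge0$ and $w(0)=0$ give $w^*(0)=0$.
\item Convexity and continuity: $w^*$ is a supremum of affine functions, hence convex, and it is finite on $[0,\infty)$, hence continuous there.
\item Strict monotonicity: $w(s)/s\to0$ as $s\to0$ gives $w^*(t)>0$ for $t>0$; a convex function with $w^*(0)=0$ and positive values for $t>0$ is strictly increasing.
\item Unboundedness: $w^*(t)\ge t s_0-w(s_0)\to\infty$ for any fixed $s_0>0$.
\end{itemize}
Hence $(w^*)^{-1}(1/\mu(\Omega))$ is a well-defined positive number.

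If one prefers to avoid the inverse in degenerate cases, any $s$ with $w^*(s)\mu(\Omega)\le1$ gives the same bound. If $\int w(f/r)\,d\mu$ could be infinite for some $r>N$, the monotonicity argument above rules it out, since $\int w(f/r)\,d\mu\le1$ already holds for every $r>N$.
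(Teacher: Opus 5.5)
Your proof is correct and is essentially the standard argument. The paper gives no proof of this statement and only cites Rao--Ren, where the bound comes from the Orlicz--H\"older inequality $\int_\Omega |fg|\,d\mu\le 2\|f\|_{L^w}\|g\|_{L^{w^*}}$ applied with $g=\mathbf{1}_\Omega$, using $\|\mathbf{1}_\Omega\|_{L^{w^*}}=1/(w^*)^{-1}(1/\mu(\Omega))$; your Young-inequality computation with $s=(w^*)^{-1}(1/\mu(\Omega))$ is exactly that proof written out for $g\equiv 1$, and it tacitly uses $\mu(\Omega)<\infty$, which holds wherever the paper applies it.
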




    \section{Stability and Uniqueness of twisted K\"ahler-Einstein currents}\label{sect: unique}
We extend the stability result in~\cite{guedj2018stability,lu2021stability,Lu-Phung25-uniqueness} to the case of big cohomology classes. Let $dV$ denote the smooth volume form on $X$. Let $\theta$ be a real smooth closed (1,1) form representing a big cohomology class.
For simplicity, we normalize $\Vol(\theta)=1$.




\begin{theorem} \label{thm: stability1}
     Let $f,g\in L^{w}(X,dV)$ be nonnegative functions, where $w$ satisfies \hyperlink{conditionK}{condition (K)}.  
     Assume that there is $B>0$ such that
     \[B^{-1}\leq \|f\|_1,\|g\|_1\leq B,\quad B^{-1}\leq \|f\|_w,\|g\|_w\leq B. \]
     Let $\varphi,\psi\in\mathcal{E}(X,\theta)$ be functions with minimal singularities that solve
    \[(\theta+\ddc\varphi)^n=e^{\varphi}fdV,\quad (\theta+\ddc\psi)^n=e^{\psi}gdV. \]
    Then there is $C>0$ depending on $X$, $dV$, $n$, $w$ and $B$ such that
    \[ \sup_{X}|\psi-\varphi|\leq C\||f^{1/n}-g^{1/n}|^n\|_{w}^{1/n}.\]
\end{theorem}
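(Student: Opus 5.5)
By symmetry it suffices to bound $\sup_X(\psi-\varphi)$. Set $\varepsilon:=\||f^{1/n}-g^{1/n}|^n\|_w^{1/n}$; we may assume $\varepsilon$ is small, since otherwise the bound follows from uniform $L^\infty$ bounds on $\varphi,\psi$. The plan is to build a subsolution of the equation for $\varphi$ out of $\psi$ and an auxiliary potential, then conclude with a comparison principle in the spirit of Proposition~\ref{prop: unique}. The intended constant in the final bound is $C\varepsilon$.

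\textbf{Step 1 (uniform a priori bounds).} First we show $|\varphi-V_\theta|,|\psi-V_\theta|\le C(B,w,X)$. The normalization $1=\Vol(\theta)=\int_X e^{\psi}g\,dV$ together with Theorem~\ref{thm: Skoda/Tian} and the H\"older--Young inequality (Proposition~\ref{prop: HY}) gives a two-sided bound on $\sup_X\psi$. Then $e^{\psi}g\le e^{\sup\psi}g$ has $w$-norm controlled by $B$. Since $w$ satisfies condition (K), the Ko{\l}odziej-type uniform estimate in big classes for densities in $L^w$ gives $\psi\ge V_\theta-C$. The same argument applies to $\varphi$.

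\textbf{Step 2 (auxiliary potential and convex combination).} Define
\[h:=a\bigl(1+\varepsilon^{-1}|f^{1/n}-g^{1/n}|\bigr)^n,\]
where $a$ is fixed by $\int_X h\,dV=1$. Because $\varepsilon$ is exactly the relevant $w$-norm, $a$ is bounded above and below, and $\|h\|_w\le C$. Solve $\theta_u^n=h\,dV$ with $u$ of minimal singularities, normalized so that $\sup_X u=0$. By Step 1's $L^\infty$ estimate, $|u-V_\theta|\le C$, and hence $|u-\psi|\le C_0$.

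For $t=\varepsilon$ put $v:=(1-t)\psi+tu$. The mixed Monge--Amp\`ere inequality for non-pluripolar products in big classes gives
\[\theta_v^n\ge \bigl((1-t)e^{\psi/n}g^{1/n}+t\,h^{1/n}\bigr)^n dV.\]
Pointwise we have
\[f^{1/n}\le g^{1/n}+|f^{1/n}-g^{1/n}|\le g^{1/n}+\varepsilon a^{-1/n}h^{1/n}.\]
Also $v\le\psi+tC_0$, and $e^{\psi}$ is bounded above and below on the relevant set since $\sup\psi$ is bounded. Combining these, for a suitable constant $M$ and $v':=v-M\varepsilon$ we get
\[\theta_{v'}^n\ge e^{v'}f\,dV.\]
Here the factor $(1-t)$ and the extra term $t h^{1/n}$ absorb the error in $f^{1/n}$ after multiplying by $e^{-M\varepsilon/n}$.

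\textbf{Step 3 (comparison).} We have $\theta_\varphi^n=e^{\varphi}f\,dV$ and $\theta_{v'}^n\ge e^{v'}f\,dV$, with $\varphi,v'\in\mathcal E(X,\theta)$. On the set $\{\varphi<v'\}$ this yields $\theta_\varphi^n\le\theta_{v'}^n$, and in fact with a strict exponential gain. The domination principle (Proposition~\ref{prop: domination}), or the argument of Proposition~\ref{prop: unique} with $e^{+\varphi}$ in place of $e^{-\lambda\varphi}$, then gives $\varphi\ge v'$. Consequently
\[\psi-\varphi\le t(\psi-u)+M\varepsilon\le (C_0+M)\varepsilon,\]
and exchanging the roles of $\varphi$ and $\psi$ gives the estimate.

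\textbf{Main obstacle.} I expect the main difficulty to be Steps 1 and 2 in the big setting. One needs the uniform $L^\infty$ estimate relative to $V_\theta$ for densities in an Orlicz space satisfying condition (K), with constants depending only on $\|h\|_w$, together with the validity of the mixed inequality for non-pluripolar products of minimal-singularity potentials. For the $L^\infty$ estimate I would first try the capacity-free sublevel-set argument sketched in the introduction, combined with Proposition~\ref{prop: HY}. Care is also needed so that the sign conventions in Step 3 produce the contraction factor $c<1$ required by Proposition~\ref{prop: domination}.
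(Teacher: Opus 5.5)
Your proposal follows the paper's strategy: uniform bounds $|\varphi-V_\theta|,|\psi-V_\theta|\le C_0$, an auxiliary minimal-singularity potential $u$ whose density dominates $|f^{1/n}-g^{1/n}|^n$ after division by its $w$-norm, the shifted convex combination $(1-t)\psi+tu-M\varepsilon$, the mixed Monge--Amp\`ere inequality, and comparison. For the last step no modification is needed. Proposition~\ref{prop: unique} with $\lambda=1$ applies verbatim, since $e^{-v'}\theta_{v'}^n\ge f\,dV=e^{-\varphi}\theta_\varphi^n$, so your worry about the constant $c<1$ is moot.

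Your Step 1 is a valid alternative route. The paper sandwiches $\varphi$ between solutions of $\theta_{h_1}^n=bf\,dV$ and $(A\omega_X+\ddc h_2)^n=cf\,dV$ via the comparison principle. You instead get two-sided bounds on $\sup_X\psi$ from $\int_Xe^{\psi}g\,dV=1$:
\begin{itemize}
\item the lower bound comes from $\|g\|_1\le B$;
\item the upper bound comes from $\|g\|_1\ge B^{-1}$, the volume decay of $\{\psi-\sup_X\psi<-T\}$ from Theorem~\ref{thm: Skoda/Tian}, and the uniform integrability of $g$ from Proposition~\ref{prop: HY}.
\end{itemize}
You then apply Ko{\l}odziej's estimate once, to the density $e^{\psi}g\le e^{\sup_X\psi}g$. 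This avoids the auxiliary equations and the passage to the class of $A\omega_X$.

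There is, however, a constant-level error in Step 2 as written. With $t=\varepsilon$, the extra term only gives $t\,h^{1/n}\ge a^{1/n}|f^{1/n}-g^{1/n}|$. What must be absorbed is $e^{v'/n}|f^{1/n}-g^{1/n}|$. Where $f\gg g$, this forces $e^{v'}\lesssim a$, i.e.\ essentially $e^{\sup_X\psi}\le a$. That fails in general. We have $a\le(\int_XdV)^{-1}$, but if $\theta$ is K\"ahler, $dV$ is a multiple of $\theta^n$ and $g$ is a constant $<1$, then $e^{\psi}=1/\|g\|_1>(\int_XdV)^{-1}$. The shift $v'=v-M\varepsilon$ only contributes a factor $e^{-M\varepsilon/n}\approx 1$, so it cannot repair an $O(1)$ multiplicative mismatch.

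The fix is as follows. Take $t=K\varepsilon$ with $K:=(e^{\sup_X\psi}/a)^{1/n}$, which is uniformly bounded by Step 1. Assume $K\varepsilon\le 1/2$ (otherwise Step 1 already gives the bound), and take $M\ge K(C_0+2n)$. Then $e^{v'/n}g^{1/n}\le(1-t)e^{\psi/n}g^{1/n}$ and $e^{v'/n}|f^{1/n}-g^{1/n}|\le t\,h^{1/n}$, and you obtain $\psi-\varphi\le (KC_0+M)\varepsilon$. This is exactly what the factor $e^{C_0/n}$ in the paper's $\varepsilon$, together with the weight $e^{\psi-C_0}$ in its auxiliary density, accomplishes.

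Also, only an upper bound on $e^{\psi}$ is used. A lower bound does not hold near the non-ample locus in the big case, and it is not needed.
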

\begin{proof}The proof is almost identical to that of~\cite[Theorem 3.1]{Lu-Phung25-uniqueness}, which goes back to~\cite{kolodziej1996sufficient,guedj2018stability,lu2021stability}. 

Let $h_1\in\PSH(X,\theta)$, $h_2\in\PSH(X,A\omega_X)$ be such that $\sup_X h_1=\sup_X h_2=0$ and
\[(\theta+\ddc h_1)^n=bfdV,\quad (A\omega_X+\ddc h_2)^n=cfdV. \]
Since $B^{-1}\leq \|f\|_1\leq B$, it follows that $b,c$ are uniformly bounded. By Ko{\l}odziej's $L^\infty$ estimate (\cite{kolodziej1998complex,boucksom2010monge,DangZhangZhou26-estimate-big}), there exists $C_1>0$ depending on $X$, $dV$, $n$, $w$ and $B$
such that $h_1\geq V_\theta-C_1$ and $h_2\geq -C_1$. Since 
\[e^{-h_1-\log b}(\theta+\ddc h_1)^n\geq e^{-\varphi}(\theta+\ddc\varphi)^n \]
and $\theta\leq A\omega_X$ so \[e^{-h_2-\log c-C_1}(A\omega_X+\ddc h_2)^n\leq e^{-\varphi}(A\omega_X+\ddc\varphi)^n, \]
the domination principle implies $V_\theta-C_1+\log b\leq \varphi\leq C_1+\log c$. Thus there is a uniform constant $C_0$ such that $V_\theta-C_0\leq \varphi\leq C_0$. Since $\varphi-C_0\leq 0$ and is $\theta$-psh, so $\varphi-C_0\leq V_\theta$
Therefore, $|\varphi-V_\theta|\leq C_0$. By symmetry, we have $|\psi-V_\theta|\leq C_0$.  

    We assume
    \[\varepsilon:=e^{C_0/n}\||f^{1/n}-g^{1/n}|^n\|_w^{1/n}\in(0,1/2). \]
    It follows from~\cite{boucksom2010monge,DangZhangZhou26-estimate-big} that there exists a unique  $v\in\PSH(X,\theta)$ with minimal singularities solving
    \begin{equation*}
        (\theta+\ddc v)^n=\left(\frac{e^{\psi}|f^{1/n}-g^{1/n}|}{e^{C_0}\||f^{1/n}-g^{1/n}|^n\|_w^{1/n}} +a\right)dV=hdV,\quad\sup_X v=0,
    \end{equation*} where $a\geq 0$ is a normalizing constant. By the triangle inequality for the $L^w$-norm, we have $\|h\|_w\leq C$, hence $v$ has minimal singularities, i.e., $v\geq V_\theta-C_0$ by Kolodziej's $L^\infty$ estimate again. We set $$u:=(1-\varepsilon)\psi+\varepsilon v-(C_0+2n)\varepsilon.$$ By the mixed Monge–Ampère inequalities, see~\cite{dinew2009inequality}, we have
    \begin{align*}
        (\theta+\ddc u)^n&=\sum_{k=0}^n\binom{n}{k}(1-\varepsilon)^k\varepsilon^{n-k}\theta_{\psi}^k\wedge\theta_v^{n-k}\\
        &\geq \sum_{k=0}^n\binom{n}{k}(1-\varepsilon)^ke^{\psi k/n}g^{k/n} e^{\psi (n-k)/n}|f^{1/n}-g^{1/n}|^{n-k} dV\\
        &=e^\psi((1-\varepsilon)g^{1/n}+|f^{1/n}-g^{1/n}|)^ndV\\
        &\geq e^{\psi+n\log(1-\varepsilon)}fdV\geq e^ufdV
    \end{align*} noting that $\log(1-\varepsilon)\geq -2\varepsilon$. By the domination principle (Proposition~\ref{prop: unique}), we have $u\leq \varphi$. This implies that 
    \begin{align*}
        \psi \leq \varphi +(2C_0+2n)\varepsilon.
    \end{align*} By symmetry, we get the desired inequality.
\end{proof}
\begin{theorem} \label{thm: stability0}
    Let $f,g\in L^{w}(X,dV)$ be probability densities, where $w$ satisfies \hyperlink{conditionK}{condition (K)}.  
    Assume that there is $B>0$ such that \(\|f\|_w,\|g\|_w\leq B\).
    Let $\varphi,\psi\in\mathcal{E}(X,\theta)$ be functions with minimal singularities such that $\sup_X\varphi=\sup_X\psi=0$ and
    \[(\theta+\ddc\varphi)^n=fdV,\quad (\theta+\ddc\psi)^n=gdV. \]
    Then there is $C>0$ depending on $X$, $dV$, $n$, $w$ and $B$ such that
    \[ \sup_{X}|\psi-\varphi|\leq C\||f^{1/n}-g^{1/n}|^n\|_{w}^{1/n}.\]
\end{theorem}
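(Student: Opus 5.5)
The plan is to reduce Theorem~\ref{thm: stability0} to Theorem~\ref{thm: stability1} by rewriting the equations in the twisted form $\theta_u^n=e^{u}F\,dV$. Concretely, I would set
\[
\tilde f:=e^{-\varphi}f,\qquad \tilde g:=e^{-\psi}g,
\]
so that $(\theta+\ddc\varphi)^n=e^{\varphi}\tilde f\,dV$ and $(\theta+\ddc\psi)^n=e^{\psi}\tilde g\,dV$. This is not directly useful, since $e^{-\varphi}$ is unbounded off the ample locus and the $\tilde f,\tilde g$ depend on the unknowns. The cleaner route is to first get a uniform $L^\infty$ bound and then argue directly, mimicking the proof of Theorem~\ref{thm: stability1}.

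\textbf{Step 1 ($L^\infty$ bound).} Since $\|f\|_w,\|g\|_w\le B$, $\sup_X\varphi=\sup_X\psi=0$, and $\Vol(\theta)=1$, Ko{\l}odziej's estimate in big classes (\cite{boucksom2010monge,DangZhangZhou26-estimate-big}) gives a constant $C_0=C_0(X,dV,n,w,B)$ with
\[
V_\theta-C_0\le \varphi,\psi\le V_\theta .
\]

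\textbf{Step 2 (perturbation).} Put $\delta:=\||f^{1/n}-g^{1/n}|^n\|_w^{1/n}$. If $\delta$ is bounded below by a fixed constant, the bound follows from Step 1, so assume $\delta$ small. Let $v$ be the minimal-singularity solution with $\sup_X v=0$ of
\[
(\theta+\ddc v)^n=\bigl(\delta^{-n}|f^{1/n}-g^{1/n}|^n+a\bigr)dV,
\]
where $a\ge 0$ is the normalizing constant. The density has $L^w$-norm $\le 1+a\|1\|_w$, and $a\le 1$ because the total mass is $1$. Hence $v\ge V_\theta-C_0$ after enlarging $C_0$. Set $u:=(1-\delta)\psi+\delta v$. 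By the mixed inequalities \cite{dinew2009inequality}, as in the display of Theorem~\ref{thm: stability1},
\[
\theta_u^n\ge\bigl((1-\delta)g^{1/n}+|f^{1/n}-g^{1/n}|\bigr)^n dV\ge (1-\delta)^n f\,dV=(1-\delta)^n\theta_\varphi^n .
\]

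\textbf{Step 3 (comparison).} Since there is no exponential factor, Proposition~\ref{prop: unique} is unavailable. I would instead use the domination principle (Proposition~\ref{prop: domination}) with a shift. For any $t>0$, on $\{\varphi<u-t\}$ we have $\theta_\varphi^n\le(1-\delta)^{-n}\theta_u^n$, which gives only constant $c\ge 1$. The main obstacle is exactly here: with $c\ge 1$ the principle gives nothing directly.

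To resolve it, I would pass to the exponential form. Write $\theta_\varphi^n=e^{\varphi}\,(e^{-\varphi}f)dV$, and note that $e^{-\varphi}\le e^{C_0-V_\theta}$. So alternatively I would compare $\varphi$ with $u':=u-n\log\frac{1}{1-\delta}-c_1\delta$ via Proposition~\ref{prop: unique} applied to $e^{-\varphi}\theta_\varphi^n$ versus $e^{-u'}\theta_{u'}^n$. This needs $e^{-u'}\theta_{u'}^n\ge e^{-\varphi}\theta_\varphi^n$, which fails because $u'$ and $\varphi$ differ.

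The workable fix is the standard trick: use Proposition~\ref{prop: domination} with $\tilde u:=(1-\delta)u+\delta(V_\theta)$ (or better, $+\,\delta h$ where $h$ solves $\theta_h^n=dV$-type with minimal singularities). This gains a strictly positive extra mass term on the set $\{\varphi<\tilde u-t\}$, and the mass term makes the constant $c<1$ after absorbing $(1-\delta)^{-n}$. Carrying this through should yield $\psi\le\varphi+C\delta$; symmetry then gives the claim.
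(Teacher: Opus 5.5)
Steps 1 and 2 are essentially fine; they mirror the proof of Theorem~\ref{thm: stability1}. Step 3, however, has a genuine gap, and the proposed fix cannot work.

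\textbf{Why the fix fails.} With $\theta_h^n=dV$, the mixed inequalities give at best $\theta_{\tilde u}^n\ge\bigl((1-\delta)^2f^{1/n}+\delta\bigr)^n dV$. This dominates $c^{-1}f\,dV$ with $c<1$ only where $f^{1/n}<(2-\delta)^{-1}$. You have no control over where $\{\varphi<\tilde u-t\}$ lies, so the domination principle cannot be triggered. No other choice of $h$ helps either: $\theta_{\tilde u}^n$ and $\theta_\varphi^n$ both have total mass $1$, so a measure domination $\theta_{\tilde u}^n\ge c^{-1}\theta_\varphi^n$ with $c<1$ obtained from pointwise mixed inequalities is impossible on $X$.

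\textbf{The structural obstruction.} Your comparison uses $\sup_X\varphi=\sup_X\psi=0$ only through $V_\theta-C_0\le\varphi,\psi\le V_\theta$. That bound survives replacing $\varphi$ by $\varphi-c_0$ (after enlarging $C_0$ slightly), while $\theta^n_{\varphi-c_0}=f\,dV$ and $\delta$ is unchanged. An argument of this type would therefore also give $\psi\le\varphi-c_0+C\delta$, which contradicts $\sup_X\psi=\sup_X\varphi$ as soon as $C\delta<c_0$. In Theorem~\ref{thm: stability1} the factor $e^{\varphi}$ breaks this shift invariance. Here one must first prove a shift-invariant oscillation bound and only then use the normalization.

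\textbf{The missing idea.} This is what the paper does, following Lu--Phung.
\begin{itemize}
\item With $b\simeq\delta^{-1}$, put $u_b:=P_\theta(b\varphi-(b-1)\psi)\in\mathcal{E}(X,\theta)$.
\item On the contact set $\mathcal C$, the function $b^{-1}u_b+(1-b^{-1})\psi$ touches $\varphi$ from below. Lemma~\ref{lem: maxprin} and the mixed inequalities then give $\theta_{u_b}^n=h\,dV$ with $h\le(b|f^{1/n}-g^{1/n}|+g^{1/n})^n$ on $\mathcal C$.
\item Since $\theta_{u_b}^n$ is carried by $\mathcal C$, $\|h\|_w$ is bounded independently of $b$, and Ko{\l}odziej's estimate yields $u_b-\sup_Xu_b\ge V_\theta-C_0$.
\item Set $2\alpha:=\sup_X(\varphi-\psi)-\sup_X(\psi-\varphi)$. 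A mass-splitting argument (the unit mass of $\theta_{u_b}^n$ cannot sit on $\mathcal C\cap\{\varphi\le\psi+\alpha\}$) produces a contact point in $\{\varphi>\psi+\alpha\}$, hence $\sup_Xu_b\ge b\alpha-C_0$.
\item Combined with $u_b\le b(\varphi-\psi)+\psi$, this gives $\sup_X(\psi-\varphi)+\sup_X(\varphi-\psi)\le 4C_0b^{-1}$.
\end{itemize}
Only at this last point does $\sup_X\varphi=\sup_X\psi$ enter: it makes both terms nonnegative and yields the estimate.
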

This theorem is a slight generalization of~\cite[Theorem 3.2]{Lu-Phung25-uniqueness} to the case of big cohomology classes in which its proof can be adapted. 
The proof presented below was inspired by a talk of C. H. Lu at VIASM, we thank him for helpful discussions.
\begin{proof}

Exchanging \(\varphi\) and \(\psi\) if necessary, we may assume that $\sup_X(\varphi-\psi)\geq \sup_X(\psi-\varphi)$. Set
\[2\alpha:=\sup_X(\varphi-\psi)-\sup_X(\psi-\varphi)\geq 0. \]
Set $b=(C_1\||f^{1/n}-g^{1/n}|^n\|_{w})^{-1/n}$ for $C_1>0$.
We choose $C_1>0$ so large that both \[\int_X (2b|f^{1/n}-g^{1/n}|+g^{1/n})^ndV<2\;\text{and}\; \int_X (2b|f^{1/n}-g^{1/n}|+f^{1/n})^ndV<2.  \] 
We have either
\[ \int_{\{\varphi\leq\psi+\alpha\}} (2b|f^{1/n}-g^{1/n}|+g^{1/n})^ndV<1  \]
or
\[\int_{\{\varphi>\psi+\alpha\}} (2b|f^{1/n}-g^{1/n}|+g^{1/n})^ndV<1. \]
We assume that the first inequality holds (a similar argument applies for the second).
 If $b\leq 1$ then $C_1^{-1/n}\leq \||f^{1/n}-g^{1/n}|^n\|_{w}^{1/n}$, then by  Ko{\l}odziej's $L^\infty$ estimate (\cite{boucksom2010monge,DangZhangZhou26-estimate-big}), there exists $C_0>0$ depending on $X$, $dV$, $n$, $w$, and $B$, such that \begin{equation}\label{uni-1}
 V_\theta-C_0\leq \varphi,\psi\leq V_\theta,\end{equation} 
 we are done.
  Otherwise, we set $u_b:=P_\theta(b\varphi-(b-1)\psi)$ for $b>1$. From~\cite[Theorem 3.3, Lemma 3.3]{darvas2025relative}, we have $P_\theta(b\varphi-(b-1)V_\theta)\in\mathcal{E}(X,\theta)$.
  Since $$P_\theta(b\varphi-(b-1)\psi)\geq P_\theta(b\varphi-(b-1)V_\theta),$$ we have $u_b\in\mathcal{E}(X,\theta)$.
Denote the contact set by $\mathcal{C}:=\{u_b=b\varphi-(b-1)\psi\}$. We observe that 
\[\varphi_b:=b^{-1}u_b+(1-b^{-1})\psi\leq \varphi \] with equality on $\mathcal{C}$. The maximum principle (Lemma~\ref{lem: maxprin}) yields $\mathbf{1}_\mathcal{C}\theta^n_{\varphi_b}\leq \mathbf{1}_\mathcal{C}\theta^n_\varphi$. We set $\theta_{u_b}^n=hdV$. By the mixed Monge--Amp\`ere inequality (see~\cite{dinew2009inequality}), we have
\begin{equation*}
    \begin{split}
        \theta_{\varphi_b}^n
        &=\sum_{k=0}^n\binom{n}{k}b^{-k}(1-b^{-1})^{n-k}\theta_{u_b}^k\wedge\theta_\psi^{n-k}\\
        &\geq \sum_{k=0}^n\binom{n}{k}b^{-k}(1-b^{-1})^{n-k}h^{k/n}g^{1-k/n}\\
        &=(b^{-1}h^{1/n}+(1-b^{-1})g^{1/n})^n.
    \end{split}
\end{equation*}
Hence, on $\mathcal{C}$, we have \[h\leq (b|f^{1/n}-g^{1/n}| +g^{1/n})^n.\] Thus, we have both 
\[h\leq (2b|f^{1/n}-g^{1/n}| +g^{1/n})^n\;\text{and}\; h\leq (2b|f^{1/n}-g^{1/n}| +f^{1/n})^n.\]
It follows that
\[1=\int_X\theta^n_{u_b}=\int_{\mathcal{C}}\theta_{u_b}^n\leq \int_\mathcal{C}(b|f^{1/n}-g^{1/n}| +g^{1/n})^ndV<2. \] Since \[\int_{\mathcal{C}\cap\{\varphi\leq\psi+\alpha\}}(b|f^{1/n}-g^{1/n}| +g^{1/n})^ndV<1 \]
this yields that $\mathcal{C}\cap\{\varphi>\psi+\alpha\}$ is not empty. Let $x_b\in \mathcal{C}\cap\{\varphi>\psi+\alpha\}$. We see that
\[u_b(x_b)-V_\theta(x_b)=b(\varphi(x_b)-\psi(x_b))+\psi(x_b)-V_\theta(x_b)\geq b\alpha-C_0, \]
 hence $\sup_X u_b=\sup_X(u_b-V_\theta)\geq b\alpha -C_0$ independent of $b$. 
 Note that since $u_b-\sup_X u_b\leq V_\theta$, we have $u_b-V_\theta\leq\sup_X u_b$, hence $\sup_X(u_b-V_\theta)\leq \sup_X u_b$. Trivially, $\sup_Xu_b\leq \sup_X(u_b-V_\theta)$, therefore, $\sup_X u_b=\sup_X(u_b-V_\theta)$.
 
 {
 It follows from the convexity of the Young function $w$ and \hyperlink{conditionK}{condition (K)}
that there exists a constant $C=C(w)$ such that
\[
\|h\|_{w}
\leq
C\Bigl(
b^n\bigl\||f^{1/n}-g^{1/n}|^n\bigr\|_{w}
+\|g\|_{w}
\Bigr).
\]
By the choice of $b$, since $\|g\|_{w}\leq B$, hence
\(
\|h\|_{w}\leq C(B,C_1)\),
independently of $b$. Therefore, Ko{\l}odziej's $L^\infty$ estimate (\cite{boucksom2010monge,DangZhangZhou26-estimate-big}) yields
\[
u_b-\sup_Xu_b\geq V_\theta-C_0,
\]
where $C_0$ depends only on $X$, $dV$, $n$, $w$, and $B$.} Note that  $C_0$ in \eqref{uni-1} can be chosen sufficiently large such that it is the same here.
 Since $u_b\leq b\varphi-(b-1)\psi$, it follows that 
 \[ V_\theta-2C_0+b\alpha\leq b(\varphi-\psi)\Rightarrow \psi-\varphi+\alpha\leq 2C_0b^{-1}.\]
This gives
\[\sup_X(\psi-\varphi)\leq \frac{\sup_X(\psi-\varphi)-\sup_X(\varphi-\psi)}{2} +2C_0b^{-1},\] hence
\[\sup_X(\psi-\varphi)+\sup_X(\varphi-\psi)\leq 4C_0b^{-1}. \] Since $\sup_X\varphi=\sup_X\psi=0$ we have $\sup_X(\varphi-\psi)\geq 0$ and $\sup_X(\psi-\varphi)\geq 0$. Therefore, the above inequality yields
\[\sup_X(\varphi-\psi)\leq 4C_0b^{-1}\; \text{and}\; \sup_X(\psi-\varphi)\leq 4C_0b^{-1}. \] Hence, $\sup_X|\varphi-\psi|\leq 4C_0b^{-1}$.
 The choice of $b$ thus finishes the proof.

 In particular, if $w(t)=t^p$ for some $p>1$, then we have
 \[ \sup_X|\psi-\varphi|\leq C\|f^{1/n}-g^{1/n}\|_{np}.\]\end{proof}

We are now able to prove the uniqueness of the
solution to mean field equations with a small temperature parameter.
\begin{theorem} \label{thm: uni} 
Assume $\mu=fdV$ is a probability density on $X$, where $f\in L^p(dV)$ for some $p>1$.
    Then there exists $\lambda_0>0$ such that for all $\lambda\in(0,\lambda_0)$, the equation
    \begin{equation}\label{eq: tKE-cmae}
        (\theta+\ddc\varphi)^n=e^{-\lambda\varphi}fdV 
    \end{equation} has a unique solution $\varphi\in\PSH(X,\theta)$ with minimal singularities. Moreover, $\varphi$ is H\"older continuous on $\textrm{Amp}(\theta)$.
\end{theorem}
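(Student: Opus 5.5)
Existence is taken as known (Darvas--Zhang, or a fixed-point/variational argument), so the plan concentrates on uniqueness, using the stability estimate Theorem~\ref{thm: stability0} with $w(t)=t^p$. Let $\varphi_1,\varphi_2$ be two solutions of \eqref{eq: tKE-cmae} with minimal singularities. First I need uniform a priori bounds, independent of $\lambda\in(0,\lambda_0)$. Write $c_j:=\sup_X\varphi_j$ and $\tilde\varphi_j:=\varphi_j-c_j$, so that
\[
(\theta+\ddc\tilde\varphi_j)^n=f_jdV,\qquad f_j:=e^{-\lambda c_j}e^{-\lambda\tilde\varphi_j}f .
\]
Since $\Vol(\theta)=1$, $f_j$ is a probability density. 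By Theorem~\ref{thm: Skoda/Tian} and H\"older's inequality, for $\lambda$ small enough that $\lambda q p'\le\alpha$ (with $1<q<p$ fixed), the densities $e^{-\lambda\tilde\varphi_j}f$ are bounded in $L^{q}$ uniformly. Their integral is at least $\int_X f\,dV=1$, because $\tilde\varphi_j\le 0$. Normalization then forces $e^{-\lambda c_j}\le 1$, so $\|f_j\|_q\le C$. Ko{\l}odziej's estimate then gives $V_\theta-C_0\le\tilde\varphi_j\le V_\theta$ with $C_0$ independent of $\lambda$. Since $V_\theta\le 0$, this yields $1\le\int_X e^{-\lambda\tilde\varphi_j}f\,dV\le\int_X e^{\lambda C_0-\lambda V_\theta}f\,dV\le 1+O(\lambda)$. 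Hence $|\lambda c_j|\le C\lambda$, i.e.\ $|c_j|\le C$ uniformly.

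Next I apply Theorem~\ref{thm: stability0} to $\tilde\varphi_1,\tilde\varphi_2$ with exponent $q$. Writing $\delta:=\sup_X|\varphi_1-\varphi_2|$, I aim to show
\[
\bigl\|f_1^{1/n}-f_2^{1/n}\bigr\|_{nq}\le C\lambda\,\delta .
\]
Since $f_j^{1/n}=e^{-\lambda\varphi_j/n}f^{1/n}$, the mean value theorem gives $|f_1^{1/n}-f_2^{1/n}|\le\frac{\lambda}{n}|\varphi_1-\varphi_2|\,e^{-\lambda\min(\varphi_1,\varphi_2)/n}f^{1/n}$. The factor $e^{-\lambda\min\varphi_j}f$ is bounded in $L^q$ by the previous step. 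So the $L^{nq}$ norm of the difference is at most $C\lambda\delta$. Stability then yields $\sup_X|\tilde\varphi_1-\tilde\varphi_2|\le C\lambda\delta$.

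It remains to control $|c_1-c_2|$. From the normalizations $e^{\lambda c_j}=\int_X e^{-\lambda\tilde\varphi_j}f\,dV$, the mean value theorem gives
\[
|c_1-c_2|\le C\sup_X|\tilde\varphi_1-\tilde\varphi_2|\le C\lambda\delta .
\]
Hence $\delta\le C'\lambda\delta$, so $\delta=0$ once $\lambda<\lambda_0:=1/C'$. The main obstacle is making sure every constant here (the $L^q$ bounds, $C_0$, and the stability constant) is genuinely independent of $\lambda$ and of the particular solutions. That is why I derive the $\lambda$-uniform a priori estimate of the first step before anything else, shrinking $\lambda_0$ so that $\lambda p'q\le\alpha$.

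For H\"older continuity on $\mathrm{Amp}(\theta)$, the density $e^{-\lambda\varphi}f$ lies in $L^q$ with $\varphi$ bounded relative to $V_\theta$. I would invoke the known H\"older regularity in big classes for $L^q$ densities (Demailly et al., Dang--Do--Pham), or run the Demailly regularization with Lemma~\ref{lem kiselman} and Lemma~\ref{lem: Jensen} combined with the stability estimate, in the standard way.
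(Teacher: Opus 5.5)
Your proposal is correct and follows essentially the same route as the paper: normalize by the suprema, control the additive constants through $e^{\lambda c_j}=\int_X e^{-\lambda\tilde\varphi_j}f\,dV$ and Skoda--Tian, apply Theorem~\ref{thm: stability0} to get $\delta\le C\lambda\delta$, and cite existence (Darvas--Zhang) and H\"older regularity (Theorem D of Demailly et al.). Keeping the weight $e^{-\lambda\min(\varphi_1,\varphi_2)}f$ inside the $L^q$ norm when you bound $\|f_1^{1/n}-f_2^{1/n}\|_{nq}$ is more careful than the paper's sup-norm bound, which is not finite when $V_\theta$ is unbounded; one small fix is that the H\"older step needs $\lambda qp/(p-q)\le\alpha$ rather than $\lambda qp'\le\alpha$, which only changes the size of $\lambda_0$.
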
 We emphasize that the uniqueness of solutions fails for $\lambda>0$ large enough, as illustrated in an example of Fubini–Study metrics~\cite[Example 3.5]{Lu-Phung25-uniqueness}.
\begin{proof} By~\cite[Theorem B]{boucksom2010monge}, any solution of~\eqref{eq: tKE-cmae} has minimal singularities since the
right-hand side has an $L^q$ density for some $q>1$ by Demailly's openness theorem~\cite{berndtsson2015openness,guan2015proof}. The H\"older continuity follows from~\cite[Theorem D]{demailly2014holder}; see also Section~\ref{sect: modulus}.
The existence of solutions to~\eqref{eq: tKE-cmae} for sufficiently small $\lambda>0$ is proved in~\cite{Darvas-Zhang24-twisted}, using the properness of Ding twisted functional. For completeness, we give a direct proof in this regime.

Let $\PSH_0(X,\theta)\subset L^1(dV)$ be the set of $\theta$-psh functions $u$ normalized by $\sup_X u=0$. We see that $\PSH_0(X,\theta)$ is convex and compact with respect to the $L^1$-topology. Fixing $1<q<p$, it follows from the  H\"older inequality and Skoda-Tian integrability theorem that there is $\lambda>0$ such that $e^{-\lambda u}f\in L^q(dV)$. Let $\lambda_0>0$ be the supremum of all such $\lambda$. Let $\gamma>0$ such that $\gamma+\lambda<\lambda_0$. For each $u\in \PSH_0(X,\theta)$, by~\cite{boucksom2010monge,berman2013variational}, there exists a unique $v$ with minimal singularities such that 
\begin{equation}\label{eq: cmae-gamma}
    (\theta+\ddc v)^n=e^{\gamma v-(\gamma+\lambda)u}fdV. 
\end{equation}
 We show that $\Psi:\PSH_0(X,\theta)\to\PSH_0(X,\theta)$, which maps $u$ to $v-\sup_X v$, is continuous with respect to the $L^1$-topology. In fact, let $u_j$ be a sequence in $\PSH_0(X,\theta)$ converging in $L^1$ to $u$. By H\"older’s inequality and the Skoda-Tian integrability theorem, extracting a subsequence, $e^{-(\gamma+\lambda)u_j}f$ converges in $L^r$ for some $r>1$, to $e^{-(\gamma+\lambda)u}f$. By the stability property~\cite[Theorem 4.2]{guedj2018stability} (see also Theorem~\ref{thm: stability1}), $v_j$ converges uniformly to $v$ in $\PSH_0(X,\theta)$. Since $v_j$ and $v$ have minimal singularities, $$(\theta+\ddc v_j)^n\longrightarrow(\theta+\ddc v)^n$$ as $j\to+\infty$ as shown in~\cite[Theorem 2.17]{boucksom2010monge}. It follows that $v$ solves~\eqref{eq: cmae-gamma}, thus $\Psi(u_j)\to\Psi(u)$ in $\PSH_0(X,\theta)$, the continuity of $\Psi$ follows. By Schauder’s fixed point theorem, see e.g., \cite[Theorem B.2, page 302]{Taylor-bookIII}, there exists a fixed point of $\Psi$, say
$\varphi$. Thus, $\varphi+C$ solves~\eqref{eq: tKE-cmae} for some constant $C$.


It remains to prove the uniqueness.    Suppose that $\varphi$ and $\psi$ are two solutions with minimal singularities. We set $\varphi_0=\varphi-\sup_X\varphi$ and $\psi_0=\psi-\sup_X\psi$. 
     We rewrite the Monge–Ampère equations of $\varphi_0$ and $\psi_0$ as
     \[(\theta+\ddc\varphi_0)^n=e^{-\lambda\varphi_0-b}fdV,\quad(\theta+\ddc\psi_0)^n=e^{-\lambda\psi_0-c}fdV, \] where $b=\lambda\sup_X\varphi$ and $c=\lambda\sup_X\psi$. The constants $b$ and $c$ can be computed as
    \[e^{b}=\int_Xe^{-\lambda \varphi_0}fdV,\quad e^{c}=\int_Xe^{-\lambda \psi_0}fdV. \] 
By H\"older’s inequality and the Skoda–Tian estimate we have $|b|\leq C_1$ and $|c|\leq C_1$. Hence,
\[|b-c|\leq e^{C_1}|e^b-e^c|\leq \lambda C_3\sup_X|\varphi_0-\psi_0|,\] where in the last inequality we have used $|e^x-e^y|\leq |x-y|(e^x+e^y)$ for $x,y\in\mathbb{R}$.
By the stability result (Theorem ~\ref{thm: stability0}), we obtain \begin{align*}
    \sup_X|\varphi_0-\psi_0|&\leq C\sup_X|e^{-(\lambda\varphi_0+b)/n}-e^{-(\lambda\psi_0+c)/n} | \cdot\|f\|_p^{1/n}\\
    &\leq C_4\sup_X|\lambda(\varphi_0-\psi_0)+b-c|\\
    &\leq C_5\lambda\sup|\varphi_0-\psi_0|.
\end{align*}
If we choose $\lambda>0$ so small that $\lambda C_5<1$, then $\varphi_0=\psi_0$, hence $\varphi=\psi$. 
\end{proof} 
\section{Modulus of continuity of Monge-Amp\`ere Potentials}
\label{sect: modulus} In this section, we prove our main Theorem~\ref{thm: mainthm}. We establish the stability result, which is analogous to~\cite[Proposition 2.6]{eyssidieux2009singular}. 

\begin{theorem}\label{thm: stability2}
    Assume that $\mu$ is a probability measure absolutely continuous with respect to a smooth volume form $dV$, $d\mu=fdV$, where $f\in L^1(\log L)^p(X,dV)$ with $p>n$. Let $\varphi$, $\psi$ be $\theta$-psh functions such that \[\frac{1}{\Vol(\theta)}(\theta+\ddc\varphi)^n=\mu,\quad V_\theta-M_0\leq \varphi \leq V_\theta, \]
    for some positive constant $M_0>0$, and $\psi\leq 0$. Fix $\varepsilon>0$. Then there exists $C=C(\omega_X, n, p, M_0, \|f\|_{L^1(\log L)^p})>0$ such that 
    \begin{equation}\label{eq: satbility2.1}
        \sup_X(\psi-\varphi)\leq \varepsilon+ C[\mu(\{\varphi-\psi<-\varepsilon\})]^{\alpha_0},
    \end{equation} where $\alpha_0:=\frac{p-n}{pn}$. In particular, for any $0<\gamma<\frac{p}{n}-1$, we have
\begin{equation}\label{eq: satbility2.2}
    \sup_X(\psi-\varphi)\leq \frac{C}{|\log\|(\psi-\varphi)_+\|_{1}|^\gamma}.
\end{equation}
\end{theorem}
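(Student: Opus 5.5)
Set $m:=\sup_X(\psi-\varphi)$; we may assume $m>\varepsilon$. The plan is to compare $\varphi$ with an auxiliary $\theta$-psh function built on the sublevel set $E:=\{\varphi-\psi<-\varepsilon\}$ and conclude by the domination principle (Proposition~\ref{prop: domination}), as in the capacity-free approach announced in the introduction.

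\textbf{Auxiliary potential.} Let $\mu(E)=:\delta$; if $\delta=0$, the domination principle applied to $\varphi$ and $\psi-\varepsilon$ gives $\psi-\varepsilon\le\varphi$, so $m\le\varepsilon$. If $\delta>0$, solve
\[
(\theta+\ddc v)^n=\Vol(\theta)\,\frac{\mathbf 1_E f}{\delta}\,dV,\qquad \sup_X v=0,
\]
using \cite{boucksom2010monge}. The density $g=\mathbf 1_E f/\delta$ is controlled in the Orlicz space $L^1(\log L)^p$ only after rescaling. By the H\"older--Young inequality (Proposition~\ref{prop: HY}) and the fact that $w(t)=t(\log(1+t))^p$ satisfies condition (K), Ko{\l}odziej's estimate as in the big case \cite{DangZhangZhou26-estimate-big} should give
\[
v\ge V_\theta - C\,\delta^{-1/n}\cdot\delta^{\,\text{(gain)}}.
\]
A cleaner route is the usual one. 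Since $\|\mathbf 1_E f\|_{L^1(\log L)^p}\le\|f\|$, the $L^\infty$ estimate for the measure $\mathbf 1_E\mu$ of small mass takes the form $\operatorname{osc}(v-V_\theta)\le C\delta^{-1/n}\cdot\delta^{\frac{p-n}{pn}+\frac1n}$ after scaling. I would derive this by the sublevel-set/Kiselman-free argument, tracking how the constant depends on the total mass; this tracking of the $\delta$-dependence is the main obstacle. Concretely, I would instead take $u:=(1-s)\psi+s\,\tilde v$ with $\tilde v=P_\theta(\cdot)$ normalized so that $\tilde v\ge V_\theta - C\delta^{-\alpha_0}\cdot s^{-1}$.

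\textbf{Comparison.} Form $u:=\psi+s(v-V_\theta)-\varepsilon$-type combination, more precisely $u=(1-s)\psi+s v-\varepsilon$ with $s\in(0,1)$ chosen later. On $\{\varphi<u\}\subset E$ (since $v\le0$ and $\psi\le0$), the mixed Monge--Amp\`ere inequality gives $\theta_u^n\ge s^n\Vol(\theta)\delta^{-1}\mu$. Thus $\theta_\varphi^n\le s^{-n}\delta\,\theta_u^n$ there, and with $s^n=2\delta$ the domination principle yields $u\le\varphi$. Hence
\[
\psi-\varphi\le\varepsilon+s(\psi-v)\le\varepsilon+s\,\sup_X(-v+V_\theta)+sM_0 .
\]
The remaining task is to show $s\cdot\operatorname{osc}(v-V_\theta)\lesssim\delta^{\alpha_0}$, i.e.\ $\operatorname{osc}(v-V_\theta)\le C\delta^{\alpha_0-1/n}$ whenever the density is $\mathbf 1_E f/\delta$.

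This is the key step, a quantitative Ko{\l}odziej estimate. I would prove it via the decay of $\mathrm{Vol}$ of sublevel sets $\{v<V_\theta-t\}$ using the H\"older--Young inequality with $w$ and Skoda--Tian (Theorem~\ref{thm: Skoda/Tian}), together with Lemma~\ref{lemma: ddl511} to build comparison functions. The $L\log^pL$ norm of $\mathbf 1_Ef/\delta$ is about $\delta^{-1}(\log(1/\delta))^{-p}$-controlled, and the iteration yields the exponent $\alpha_0=\frac{p-n}{pn}$.

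\textbf{From \eqref{eq: satbility2.1} to \eqref{eq: satbility2.2}.} Bound $\mu(E)$ by Chebyshev and the H\"older--Young inequality: $\mu(E)\le\int_E f\,dV\le C/(\log(1/\mathrm{Vol}(E)))^{p}$, and $\mathrm{Vol}(E)\le\varepsilon^{-1}\|(\psi-\varphi)_+\|_1$. Choosing $\varepsilon=|\log\|(\psi-\varphi)_+\|_1|^{-\gamma}$ and optimizing gives the bound with any $\gamma<p\alpha_0=\frac pn-1$.
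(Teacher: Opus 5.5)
There is a genuine gap, and it is exactly the step you flag as the main obstacle.

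First, a repairable slip. The inclusion $\{\varphi<u\}\subset E$ does not follow from $v\le 0$ and $\psi\le 0$, since $u-(\psi-\varepsilon)=s(v-\psi)$; you need $v\le\psi$. You get this as in the paper: replace $\psi$ by $\max(\psi,\varphi)$, so that $\psi\ge V_\theta-M_0$, and normalize $\sup_X v=-M_0$. With that fix, your comparison with $s^n=2\delta$ is correct and gives
\[
\sup_X(\psi-\varphi)\le\varepsilon+(2\delta)^{1/n}\sup_X(V_\theta-v).
\]
So everything rests on the bound $\sup_X(V_\theta-v)\le C\delta^{\alpha_0-1/n}=C\delta^{-1/p}$ for the solution of $\theta_v^n=\Vol(\theta)\mathbf 1_E f\,dV/\delta$.

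You do not prove this bound, and it does not follow from the Ko{\l}odziej-type $L^\infty$ estimate the paper uses, whose constant is not tracked. In general the density $\mathbf 1_E f/\delta$ has $L^1(\log L)^p$-entropy of order $\delta^{-1}$. The required dependence is also sharp, so no loss is affordable. Take $E$ a small ball in $\textrm{Amp}(\theta)$ on which $f\equiv M$ with $\log M\approx(B/\delta)^{1/p}$, so the entropy is about $B$. Then $\mathbf 1_E\mu/\delta$ is normalized Lebesgue measure on a ball of radius $\rho$ with $\log(1/\rho)\gtrsim\delta^{-1/p}$, and a relative-capacity comparison gives $\sup_X(V_\theta-v)\gtrsim\log(1/\rho)$.

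Tracking constants in the standard capacity-based proof of Ko{\l}odziej's estimate gives only $\sup_X(V_\theta-v)\lesssim\delta^{-n/(p-n)}$. That leads to the exponent $\frac1n-\frac{n}{p-n}<\alpha_0$, which is even negative unless $p>n+n^2$. The intermediate claims in your ``auxiliary potential'' paragraph are also inconsistent. For instance, $\operatorname{osc}(v-V_\theta)\le C\delta^{-1/n}\delta^{\alpha_0+1/n}=C\delta^{\alpha_0}$ would tend to $0$ as $\delta\to0$, which the example above rules out.

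The missing idea, which is how the paper argues, is never to bound an auxiliary potential in $L^\infty$. For each level $s$ the paper solves $\theta_{v_s}^n=\Vol(\theta)\mathbf 1_{\Omega_s}(\psi-\varphi-s)f\,dV/A_s$ with $\sup_X v_s=-M_0$. Using the envelope $P_\theta(\psi-\chi(\psi-\varphi-s))$, Lemma~\ref{lemma: ddl511} and the domination principle, it proves the pointwise bound of Lemma~\ref{lem: key}: $(\psi-\varphi-s)^{(n+1)/n}\lesssim A_s^{1/n}(-v_s+C)$. After that, only the uniform Skoda--Tian integrability of $v_s$ is needed, which gives \eqref{eq: MTOs}.

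Young's inequality for $(\log(1+x))^p$ and H\"older then give $tH(s+t)\le A_s\le B_0H(s)^{1+\alpha_0}$ with $H(s)=\mu(\Omega_s)$. De Giorgi's lemma plus the domination principle yield \eqref{eq: satbility2.1}. The exponent $\alpha_0=\frac{n+1}{nq}-1$ comes from that H\"older step, not from any $L^\infty$ bound.

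Your passage from \eqref{eq: satbility2.1} to \eqref{eq: satbility2.2} (Chebyshev, H\"older--Young, and the choice of $\varepsilon$) matches the paper's.
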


By replacing $\psi$ with $\max(\psi,\varphi)$, we may assume that $\psi\geq \varphi$. In particular, $V_\theta\geq \psi\geq V_\theta- M_0$.
We set the sub-level set $\Omega_s:
=\{\varphi<\psi-s\}$ for $s>0$. 
Fix $r>0$. 
We solve an auxiliary complex Monge-Amp\`ere equation 
\begin{equation}
    \frac{1}{\Vol(\theta)}(\theta+\ddc v_s)^n=\frac{\mathbf{1}_{\Omega_s}(\psi-\varphi-s)^r }{A_s}fdV,\; v_s\in\mathcal{E}(X,\theta),\quad\sup_X v_s=-M_0,
\end{equation} where $A_s=\int_{\Omega_s}(\psi-\varphi-s)^rfdV$. The existence of solution $v_s$ is proved in~\cite{boucksom2010monge,berman2013variational,darvas2021log}.

\begin{lemma}\label{lem: key}
    We have
    \begin{equation}\label{eq: key-equation}
       \frac{n}{n+r} (2A_s)^{-\frac{1}{n}}(\psi-\varphi-s)^{\frac{n+r}{n}}\leq \psi-v_s+\frac{n}{n+r}(2A_s)^{\frac{1}{r}}.
    \end{equation}
\end{lemma}
\begin{proof}
     Set $$\varepsilon=\left(\frac{n+r}{n}\right)^{\frac{n}{n+r}}(2A_s)^{\frac{1}{n+r}},\quad\Lambda=\frac{n}{n+r}(2A_s)^{\frac{1}{r}},\quad C_\Lambda=\varepsilon\Lambda^{\frac{n}{n+r}}$$ so that $\frac{\varepsilon n}{n+r}\Lambda^{-\frac{r}{n+r}}=1$. 
   We consider 
\[ \chi(t) := 
\begin{cases} 0, & t\leq C_\Lambda,\\[1mm]\varepsilon^{-\frac{n+r}{n}}t^{\frac{n+r}{n}}-\Lambda , & t>C_\Lambda. 
\end{cases} \] 
The function $\chi$ is continuous, convex, and increasing. 
Let $\gamma:[0,+\infty)\to[C_\Lambda,+\infty)$ be the inverse of $\chi|_{[C_\Lambda,+\infty)}$, which is concave and increasing. A direct computation gives
    \[ \gamma(t)=\varepsilon(t+\Lambda)^{\frac{n}{n+r}}\] $\gamma(0)=C_\Lambda$, $\gamma'(0)=1$ and $0<\gamma'\leq 1$.

Set $\widetilde\Omega_s:=\{C_\Lambda<\psi-\varphi-s\}$.
We see that $\widetilde\Omega_s\subset\Omega_s$. 
We set $$\phi:= \psi-\chi(\psi-\varphi-s) \; \text{and}\; u:=P_\theta(\phi).$$ We observe that $\psi$ and $\varphi$ are quasi-continuous, and $\chi$ is continuous. Hence $\phi$ is quasi-continuous.
In addition, $u$ has minimal singularities, so $u\in\mathcal{E}(X,\theta)$. We denote by $\mathcal{C}=\{u=\phi\}$ the contact set. By Theorem~\ref{thm: envelope}, we have that $\theta_u^n$ is concentrated on the contact set $\mathcal{C}$. 

Set $v:=-\gamma(\psi-u)+\psi$.
Since $u\leq \psi$ and $\gamma$ is increasing, we have
\[v\leq \psi-\gamma(\chi(\psi-\varphi-s)). \]
If $\psi-\varphi-s>C_\Lambda$, then 
\[ \gamma(\chi(\psi-\varphi-s))=\psi-\varphi-s. \] 
Otherwise, $\psi-\varphi-s\leq C_\Lambda$, then 
\[  \gamma(\chi(\psi-\varphi-s)) = \gamma(0) = C_\Lambda \geq \psi-\varphi-s. \] 
Thus, in both cases $v\leq \varphi+s$ on $X$.
Set  $\widetilde{\mathcal{C}} := \mathcal C\cap\widetilde\Omega_s$. 
On $\widetilde{\mathcal{C}}$ we have 
 $u= \psi-\chi(\psi-\varphi-s)$, so
$v=\varphi+s$ there. 
Next, we show that  \begin{equation}\label{eq: active-contact} 
\{u<v_s\}\cap\mathcal C \subset\widetilde{\mathcal{C}}. 
\end{equation} 
Indeed, let $x\in\mathcal C\setminus\widetilde{\mathcal{C}}$. 
Then $\psi-\varphi-s\leq C_\Lambda$, so $\chi(\psi-\varphi-s)=0$ and hence $u(x)=\phi(x)=\psi(x)$.  
Since $v_s\in\PSH(X,\theta)$ and $\sup_Xv_s=-M_0$, it follows that $v_s\leq V_\theta-M_0$. 
Thus $u(x)=\psi(x)\geq v_s(x)$, proving \eqref{eq: active-contact}. 
On $\widetilde{\mathcal{C}}$, we have
\[v=-(\psi-\varphi-s)+\psi=\varphi+s. \]
By Lemma~\ref{lemma: ddl511}, we have
\[ \theta_v^n \geq \bigl(\gamma'(\psi-u)\bigr)^n\theta_u^n. \] 
    It follows from Lemma~\ref{lem: maxprin} that, on the set $\{u<v_s\}$,
    \begin{equation}
        \begin{split}
        (\chi'((\psi-\varphi-s)))^{-n}\theta_u^n&=    \mathbf{1}_{\widetilde{\mathcal{C}}}(\chi'(\psi-\varphi-s))^{-n}\theta_u^n\\
        &= \mathbf{1}_{\widetilde{\mathcal{C}}}(\gamma'(\psi-u))^n\theta_u^n\\
        &\leq  \mathbf{1}_{\widetilde{\mathcal{C}}}\theta_v^n\leq \mathbf{1}_{\widetilde{\mathcal{C}}}\theta_\varphi^n,
        \end{split}
    \end{equation} since we also have $\widetilde{\mathcal{C}}\subset \{v=\varphi+s\}$. Thus, on $\{u<v_s\}$,
    \begin{align*}
       \theta_u^n&\leq (2A_s)^{-1}(\psi-\varphi-s)^{r}\theta_\varphi^n\\
        &\leq (2A_s)^{-1}(\psi-\varphi-s)^{r}\Vol(\theta)fdV \\
        &= \frac{1}{2}\theta_ {v_s}^n.
    \end{align*}
 We apply the domination principle (Proposition~\ref{prop: domination}) to obtain
    $v_s\leq u $, so $v_s\leq \psi$. We infer that on $\widetilde\Omega_s$,
    \[v_s\leq \psi-  \frac{n}{n+r} (2A_s)^{-\frac{1}{n}}(\psi-\varphi-s)^{\frac{n+r}{n}}+\left(\frac{n}{n+r}\right)(2A_s)^{\frac{1}{r}}.\]
On the other hand, on $\Omega_s\setminus\widetilde\Omega_s$, we have $0<\psi-\varphi-s\leq C_\Lambda=(2A_s)^{\frac{1}{r}}$, hence
\[\frac{n}{n+r} (2A_s)^{-\frac{1}{n}}(\psi-\varphi-s)^{\frac{n+r}{n}}\leq \frac{n}{n+r}(2A_s)^{\frac{1}{r}}. \]
Since  $-\psi\leq -V_\theta+M_0\leq -v_s$, we obtain the inequality~\eqref{eq: key-equation}. 
Since on $X\setminus\Omega_s$, the inequality is trivial and the proof is complete.
\end{proof}

\begin{proof}[Proof of Theorem~\ref{thm: stability2}]
    We follow the same arguments as in ~\cite[pages 401-403]{GuoPhongTong23-estimates}. Lemma \ref{lem: key} (with $r=1$) yields
   \begin{equation}\label{eq: MTOs}
       \int_{\Omega_s}\exp\left[ 2\beta_0\left(\frac{\psi-\varphi-s}{A_s^{{1}/{(n+1)}}}\right)^{{(n+1)}/{n}}\right]dV\leq e^{C_n\beta_0A_s}\int_{\Omega_s}e^{-C_n\beta_0v_s}dV\leq C, 
   \end{equation}
where we have chosen $\beta_0>0$ such that $2\beta_0C_n=\alpha(n,A)$, defined in Theorem~\ref{thm: Skoda/Tian} and  
\[A_s\leq \int_XM_0fdV=C_0. \] If we set
\[ v:=\beta_0\left(\frac{\psi-\varphi-s}{A_s^{{1}/{(n+1)}}}\right)^{{(n+1)}/{n}},\]
then by the H\"older-Young inequality with respect to $\eta(x)=(\log(1+x))^p$ we have 
\begin{equation*}
    \begin{split}
        v^pf&\leq \int_0^{f}\eta(x)dx+\int_0^{v^p} \eta^{-1}(y)dy\\
        &\leq f (\log(1+f))^p+v^pe^v\\
        &\leq f (\log(1+f))^p+C_pe^{2v}.
    \end{split}
\end{equation*} By integrating both sides over $\Omega_s$ and by~\eqref{eq: MTOs}, we obtain
\begin{equation*}
    \int_{\Omega_s}v^pfdV\leq \|f\|_{L^1(\log L)^p}+C.
\end{equation*} This implies
\[\int_{\Omega_s}(\psi-\varphi-s)^{\frac{p(n+1)}{n}}fdV\leq (2\beta_0)^{-p}A_s^{p/n}(\|f\|_{L^1(\log)^p}+C). \] On the other hand, H\"older inequality yields
\begin{equation}\label{eq: As}
    \begin{split}
        A_s
        &\leq \left( \int_{\Omega_s}(\psi-\varphi-s)^{\frac{p(n+1)}{n}}fdV \right)^{\frac{n}{p(n+1)}}\cdot \left( \int_{\Omega_s}fdV\right)^{1/q}\\
        &\leq A_s^{\frac{1}{n+1}} C^{\frac{n}{p(n+1)}}\left(\int_{\Omega_s}fdV\right)^{1/q},
    \end{split}
\end{equation}
where $q>1$ such that $\frac{n}{p(n+1)}+\frac{1}{q}=1$ and $C$ depends on $\omega_X$, $dV$, $\beta_0$, $n$, $p$, and $\|f\|_{L^1(\log L)^p}$. On the other hand, for $t\in[0,1]$, 
$$A_s=\int_{\Omega_s}(\psi-\varphi-s)fdV\geq t\int_{\Omega_{s+t}}fdV.$$
Together with the inequality~\eqref{eq: As}, if we set $H(s):=\int_{\Omega_s}fdV$, then we obtain
\[ tH(t+s)\leq B_0 H(s)^{1+\alpha_0},\quad\forall\, t\in [0,1], s\geq 0,\] where $\alpha_0=\frac{n+1}{nq}-1=\frac{p-n}{pn}$, $B_0$ depends only on $\omega_X$, $\beta_0$, $n$, $p$, and $\|f\|_{L^1(\log L)^p}$. 
If $H(\varepsilon)^{\alpha_0}<\frac{1}{2B_0}$, then by De Giorgi's lemma (\cite[Remark 2.5]{eyssidieux2009singular}), we have $H(s)=0$ for $s\geq S_\infty$, where
\[S_\infty=\varepsilon+\frac{2B_0 H(\varepsilon)^{\alpha_0}}{1-2^{-\alpha_0}}. \] Hence, $\theta_\varphi^n(\{\varphi<\psi-S_\infty\})=0$, 
Proposition~\ref{prop: domination} yields
$\varphi\geq\psi-S_\infty $. 
    It follows that
    \[\sup_X(\psi-\varphi)\leq S_\infty\leq \varepsilon+ C_0 [\mu(\{\varphi<\psi-\varepsilon\})]^{\alpha_0}\]
     Otherwise, $H(\varepsilon)^{\alpha_0}\geq \frac{1}{2B_0}$, so we can choose $C>0$ so that $CH(\varepsilon)^{\alpha_0}\geq 2M_0$. This finishes the proof of the statement.

\medskip 
     For the second statement, by the H\"older-Young inequality (Proposition~\ref{prop: HY}), we have
     \begin{equation*}
         \begin{split}
             \mu(\{\varphi<\psi-\varepsilon\})&\leq \frac{C\|f\|_{L^1(\log L)^p}}{({w^*})^{-1}\left( \frac{1}{\Vol(\Omega_\varepsilon)}\right)},
         \end{split}
     \end{equation*} where $(w^*)^{-1}(t)\gtrsim(\log t)^p$. 
     Therefore, we obtain
     \begin{align*}
         \sup_X(\psi-\varphi)&\leq \varepsilon+\frac{C}{[-\log\Vol(\{\varphi<\psi-\varepsilon\})]^{p\alpha_0}}\\
         &\leq \varepsilon+ {C}\left[-\log\frac{\|(\psi-\varphi)_+\|_1}{\varepsilon}\right]^{-p\alpha_0}.
     \end{align*}Choosing $\varepsilon:=[-\log\|(\psi-\varphi)_+\|_1]^{-\gamma}$ for $\gamma\in (0,p\alpha_0)$, the conclusion follows.
\end{proof}
\begin{remark} When $f\in L^p$ for $p>1$, arguing as in Theorem~\ref{thm: stability2}, 
we obtain the following estimate:
 \begin{equation}
        \sup_X(\psi-\varphi)\leq \varepsilon+ C[\mu(\{\varphi<\psi-\varepsilon\})]^{\alpha},
    \end{equation} for any $\alpha\in(0,\frac{1}{n})$. By H\"older's inequality and by choosing $\varepsilon:=\|(\psi-\varphi)_+\|^\gamma$, we eventually have
    \[ \sup_X(\psi-\varphi)\leq C\|(\psi-\varphi)_+\|_1^\gamma\]
for any $\gamma\in(0,\frac{1}{nq+1})$ with $\frac{1}{p}+\frac{1}{q}=1$. 
This recovers the stability estimate of Guedj and Zeriahi
\cite[Proposition 5.2]{guedj2012stability}, which is a key ingredient in the proof of
\cite[Theorem D]{demailly2014holder}.
\end{remark}
\begin{remark}
    We can strengthen~\cite[Theorem 1.2]{zhang2026complex} without the nefness condition. Indeed, by using the inequality~\eqref{eq: MTOs}, we can apply Moser's iteration as in~\cite[Steps 5-8]{zhang2026complex} to obtain the stability estimate.
\end{remark}
\begin{proof}[Proof of Theorem~\ref{thm: mainthm}]
The proof follows the same arguments as in~\cite[Theorem D]{demailly2014holder}. We provide the proof here for the sake of completeness.

The solution $\varphi$
is unique up to an additive constant and has minimal singularities (see~\cite{kolodziej1998complex,boucksom2010monge,DangZhangZhou26-estimate-big}).
We can thus assume, without loss of generality, that $-M_0+V_\theta\leq \varphi\leq V_\theta$.

Fix $\gamma\in (0,\frac{p}{n}-1)$.
Set 
\[\varphi_{c,\delta}:= \frac{Ac+2K\delta}{\varepsilon_0}\psi_0+\left( 1-\frac{Ac+2K\delta}{\varepsilon_0}\right)\Phi_{c,\delta},\] where $0<\delta\leq\delta_0$, $c>0$ and $\Phi_{c,\delta}$ is the Kiselman--Legendre transform.
 In the following arguments, we choose $c>0$ so that $Ac+2K\delta=\varepsilon_0b(\delta)$, where $b(\delta)=(-\log\delta)^{-\gamma}$ and write $\varphi_\delta$ instead of $\varphi_{c,\delta}$.  
 We observe that $$\varphi_\delta\leq \Phi_{c,\delta}\leq\rho_\delta\varphi\leq 0$$ for $0<\delta\leq \delta_0$ sufficiently small. By Lemma \ref{lem kiselman}, $\theta+dd^c\Phi_{c,\delta}\geq -\varepsilon_0b(\delta)\omega_X$.
 Hence
 \[\theta+dd^c\varphi_\delta\geq \varepsilon_0b(\delta)\omega_X-(1-b(\delta))\varepsilon_0b(\delta)\omega_X=b(\delta)^2\varepsilon_0\omega_X,\]
 which implies $\varphi_\delta$ is $\theta$-psh.
 By the stability result (Theorem~\ref{thm: stability2}), we obtain
\begin{align*}
    \sup_X(\varphi_\delta-\varphi)&\leq C_0(-\log\|(\varphi_\delta-\varphi)_+\|_1)^{-\gamma},
\end{align*} where $C_0=C_0(\omega_X, n, p, M_0, \|f\|_{L^1(\log L)^p})>0$.
By Lemma \ref{lem kiselman} again, $\rho_t\varphi+Kt$ is increasing. Hence,
$\rho_\delta\varphi-\varphi\geq -K\delta$. Then by Lemma~\ref{lem: Jensen},
we have $\|(\rho_\delta\varphi-\varphi)_+\|_1\leq C\delta^2\|\varphi\|_1$.
Since $(\varphi_\delta-\varphi)_+\leq(\rho_\delta\varphi-\varphi)_+ $, we obtain
\[\sup_X(\varphi_\delta-\varphi)\leq \frac{C_1}{(-\log \delta)^{\gamma}},\]
where $C_1$ depends on $C_0$, $K$, $M_0$ and the Chern curvature of $\omega_X$. 

Fix the point $z\in \textrm{Amp}(\theta)$. 
Then the minimum in the definition of $\Phi_{c,\delta}$ is realized at $t_0=t_0(z)$. 
Therefore, the last inequality yields, at the point $z$,
\[b(\delta)(\psi_0-\varphi)+(1-b(\delta))(\rho_{t_0}\varphi+K(t_0-\delta)-\varphi-c\log(t_0/\delta))\leq C_1(-\log \delta)^{-\gamma}. \]
Since $\rho_{t_0}\varphi+Kt_0-\varphi\geq 0$ we infer that
\[ c(1-b(\delta))\log\frac{t_0}{\delta}\geq b(\delta)(\psi_0(z)-V_\theta(z)-C_2),\]
for $\delta\leq \delta_0$ sufficiently small, for $C_2=C_2(C_1,K)>0$.
Combining with $$c=\varepsilon_0 A^{-1}b(\delta)-2KA^{-1}\delta\geq\frac{\varepsilon_0}{2A} b(\delta)$$ if $\delta\leq \delta_0$ sufficiently small, then one gets that
\[ t_0\geq \delta\kappa\;\; \text{with}\; \kappa(z)=\exp \left( \frac{2A(\psi_0(z)-V_\theta(z)-C_2)}{\varepsilon_0(1-b(\delta_0))}\right).\] Since $t\mapsto\rho_t\varphi+Kt$ is increasing and $t_0(z)\geq \delta\kappa(z)$ it follows that
\begin{align*}
\rho_{\delta\kappa}\varphi(z)+K\delta\kappa\leq \rho_{t_0}\varphi(z)+Kt_0\leq\Phi_{c,\delta}(z)+K\delta= \frac{\varphi_\delta(z)-b(\delta)\psi_0}{1-b(\delta)}+K\delta.
\end{align*}
By the stability result again, we obtain
\[ \varphi_\delta-b(\delta)\psi_0\leq \varphi+b(\delta)(C_2-\psi_0),\]
using that $\varphi\leq 0$, hence
\begin{align*}
    \rho_{\delta\kappa(z)}\varphi(z)-\varphi(z)&\leq \frac{b(\delta)}{1-b(\delta)} (C_2-\psi_0(z))\\
    &\leq \frac{b(\delta)}{1-b(\delta_0)}(C_2-\psi_0(z)).
\end{align*} Replacing $\delta$ by $\delta\kappa(z)^{-1}$, one gets
\[ \rho_\delta\varphi(z)-\varphi(z)\leq (1-b(\delta_0))^{-1}(C_2-\psi_0(z))\exp \left( \frac{4A\gamma(C_2+V_\theta(z)-\psi_0(z))}{\varepsilon_0(1-b(\delta_0))}\right)b(\delta). \]
It follows that for any compact set $U\Subset \textrm{Amp}(\theta)$ there exists a constant $C$ depending on $U$ such that for any $z\in U$,
\[\rho_\delta\varphi(z)-\varphi(z)\leq C(-\log\delta)^{-\gamma}. \]
Lemma~\ref{lem:regularization-to-modulus} thus completes the proof. 
\end{proof}

\begin{lemma}\label{lem:regularization-to-modulus} 
Let $U\Subset {\rm Amp}(\theta)$ be a compact subset. Assume that there exists
a compact set $U'$ with
\(
U\Subset U'\Subset {\rm Amp}(\theta),
\)
and constants $\alpha>0$, $\delta_1>0$, and $C_U>0$ such that
\[
    \rho_\delta \varphi(x)-\varphi(x)
    \leq \frac{C_U}{|\log \delta|^\alpha},
    \qquad
    x\in U',\quad 0<\delta\leq \delta_1.
\]
Then for any $x,y\in U$, there exist constants $\delta_2>0$ and $C>0$ such that
\[
    |\varphi(x)-\varphi(y)|
    \leq
    \frac{C}
    {\left|\log d_{\omega_X}(x,y)\right|^\alpha}
\]
whenever $
    d_{\omega_X}(x,y)\leq \delta_2$.

\end{lemma}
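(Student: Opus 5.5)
The plan is the classical argument of Demailly--Dinew--Guedj--Hiep--Ko{\l}odziej--Zeriahi relating the regularization defect to the modulus of continuity. It is carried out locally in a coordinate chart around $U$, where $\varphi$ is the sum of a psh function and a smooth function.

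\textbf{Step 1: reduction to a psh function in a ball.} Cover $U$ by finitely many balls $B(a,2r)\Subset U'$ in holomorphic coordinates. On each $B(a,2r)$ we have $\theta=\ddc g$ for a smooth $g$, so $u:=\varphi+g$ is psh there. Since $V_\theta$ is locally bounded on ${\rm Amp}(\theta)$ and $|\varphi-V_\theta|\le M_0$, the function $u$ is bounded on $B(a,2r)$. Using $\exph_z(\zeta)=z+\zeta+O(|\zeta|^2)$ and the smoothness of $g$, I will compare $\rho_\delta\varphi$ with the Euclidean convolution $u*\chi_\delta$. The error is $O(\delta)$, coming from the smooth part and the distortion of the kernel and measure. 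The hypothesis then gives
\[
u*\chi_\delta(x)-u(x)\le C|\log\delta|^{-\alpha}
\]
for all $x$ in a slightly smaller ball. The distortion of the kernel is the step I expect to be the main technical obstacle. To handle it, I would first try to compare $\rho_\delta\varphi$ with the sup over a ball, $\sup_{B(x,\delta')}u$, rather than with the convolution itself. I would fall back on \cite[Lemma 2.3--Theorem 4.1]{demailly2014holder}, where exactly this comparison is made for bounded quasi-psh functions.

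\textbf{Step 2: from convolution to ball sup.} For psh $u$, the Poisson--Jensen and Harnack type inequality gives, for $x$ in a fixed compact subset,
\[
\sup_{B(x,\delta)}u-u(x)\le C_n\bigl(\hat u_{2\delta}(x)-u(x)\bigr),
\]
where $\hat u_{t}$ denotes the ball mean. This holds because $\sup_{B(x,\delta)}u\le$ the mean over $B(y,\delta)$ for a point $y$ chosen with $B(y,\delta)\subset B(x,2\delta)$. More precisely, one uses subharmonicity of $u-\sup u$ on $B(x,3\delta)$ and a Harnack estimate for the nonpositive superharmonic function $\sup_{B(x,3\delta)}u-u$. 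The ball mean is in turn controlled by a convolution with a radial kernel at a comparable scale, up to constants, since radial averages are increasing in the radius. Hence
\[
\sup_{B(x,\delta)}u-u(x)\le C|\log(c\delta)|^{-\alpha}\le C'|\log\delta|^{-\alpha}.
\]

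\textbf{Step 3: conclusion.} Let $x,y\in U$ with $\delta:=d_{\omega_X}(x,y)\le\delta_2$, where $\delta_2$ is chosen smaller than the Lebesgue number of the cover. Then $y\in B(x,C\delta)$ in coordinates, so
\[
\varphi(y)-\varphi(x)=u(y)-u(x)+g(x)-g(y)\le C|\log\delta|^{-\alpha}+C\delta .
\]
Exchanging the roles of $x$ and $y$ gives the symmetric bound. Finally $\delta\le|\log\delta|^{-\alpha}$ for small $\delta$, which yields the stated estimate.
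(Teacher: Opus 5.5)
Your Step 2 rests on a false inequality, and this is where the content of the lemma lies. The estimate
\[
\sup_{B(x,\delta)}u-u(x)\le C_n\bigl(\hat u_{2\delta}(x)-u(x)\bigr)
\]
cannot hold for psh $u$. Take $u(z)=\mathrm{Re}\,z_1$, which is pluriharmonic and bounded on every ball: the right-hand side vanishes by the mean value property, while the left-hand side equals $\delta$.

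The mean-value defect $\hat u_\delta-u$ (equivalently $\rho_\delta\varphi-\varphi$) does not see the pluriharmonic part of $u$. It can therefore control the oscillation only up to an additive term in which $\sup|u|$ enters. Your own Harnack sketch shows what one actually gets. For $y\in B(x,\delta)$ one has $B(x,\delta)\subset B(y,2\delta)\subset B(x,3\delta)$. Applying the super-mean value inequality to the nonnegative (not nonpositive) superharmonic function $\sup_{B(x,3\delta)}u-u$ on $B(y,2\delta)$ yields only
\[
\sup_{B(x,\delta)}u-u(x)\le\bigl(1-4^{-n}\bigr)\Bigl(\sup_{B(x,3\delta)}u-u(x)\Bigr)+4^{-n}\bigl(\hat u_\delta(x)-u(x)\bigr),
\]
and you cannot drop the first term on the right. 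Likewise, the inclusion $B(y,\delta)\subset B(x,2\delta)$ gives no inequality between the two means, since $u$ changes sign.

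This recursive inequality is exactly the estimate $m_\varphi(\delta)\le\frac34 m_\varphi(\gamma\delta)+C|\log\delta|^{-\alpha}$ that the paper iterates, and the iteration is the missing idea:
\begin{enumerate}
\item Apply it $N\simeq\log(r_0/\delta)/\log 3$ times, stopping at a fixed scale $r_0$ where the oscillation is bounded by $2\sup_{U'}|u|$. This is where local boundedness of $\varphi$ on ${\rm Amp}(\theta)$ is used quantitatively.
\item Note that $(1-4^{-n})^N$ is a positive power of $\delta$.
\item Split $\sum_{k<N}(1-4^{-n})^k|\log(3^k\delta)|^{-\alpha}$ at $k=N/2$ to see that it is $O(|\log\delta|^{-\alpha})$.
\end{enumerate}

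Alternatively, since the target modulus is only logarithmic, you can avoid Harnack altogether. For $|x-y|=\delta$ and $r=\delta^{1/2}$,
\[
u(y)-u(x)\le\hat u_r(y)-u(x)\le\bigl(\hat u_r(x)-u(x)\bigr)+C_n\frac{\delta}{r}\sup_{B(x,2r)}|u|\le C|\log\delta|^{-\alpha}+C\delta^{1/2}.
\]

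With either repair, Steps 1 and 3 go through. One correction to Step 1: the $O(\delta)\sup|u|$ comparison holds for the radial kernel frozen at the metric $\omega_X(z)$, not for the coordinate-Euclidean kernel. This is harmless, since a complex-linear change of variables turns the frozen kernel into a Euclidean radial average of a psh function.
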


\begin{proof} 
We follow the standard argument of \cite[Lemma 4.4]{lu2021stability}
(see also \cite{Zeriahi20-continuity}); we only sketch the main steps. 
By the local properties of Demailly's regularization and the submean
inequality, the assumption
\[
\rho_\delta\varphi-\varphi
\leq C|\log\delta|^{-\alpha}
\qquad\text{on }U'
\]
yields, for $\delta>0$ sufficiently small, an estimate for the local
oscillation
\[
m_\varphi(\delta)
:=
\sup\left\{
\varphi(y)-\varphi(x)
\,:\,
x,y\in U,\ 
d_{\omega_X}(x,y)\leq\delta
\right\}
\]
of the form
\begin{equation}\label{eq:oscillation-iteration}
m_\varphi(\delta)
\leq
\frac{3}{4}m_\varphi(\gamma\delta)
+
C|\log\delta|^{-\alpha},
\end{equation}
where $\gamma>1$ and $C>0$ are independent of $\delta$.
Iterating \eqref{eq:oscillation-iteration} $N$ times, and using the boundedness of $\varphi$ on
compact subsets of $\text{Amp}(\theta)$, we obtain
\[
m_\varphi(\delta)
\leq
C|\log\delta|^{-\alpha}.
\]
It follows that
\[
|\varphi(x)-\varphi(y)|
\leq
\frac{C_U}
{|\log d_{\omega_X}(x,y)|^\alpha},
\qquad x,y\in U,
\]
which concludes the proof.  \end{proof}

    We can adapt our argument to obtain a slight generalization of Theorem~\ref{thm: mainthm}, which also generalizes~\cite[Theorem 1.6]{guedj2025-diameter}.
    \begin{theorem}\label{mainthm}
        Let $\mu=fdV$ be a positive measure with $\mu(X)=\Vol(\theta)$, where $0\leq f\in L^1(\log L)^n(\log \log L)^p(X,dV)$ for $p>n$.  
        Let $\varphi\in\mathcal{E}(X,\theta)$ be a solution to 
        \[(\theta+\ddc\varphi)^n=\mu,\quad\sup_X\varphi=0. \]
        Then for any compact subset $K\Subset \textrm{Amp}(\theta)$,
        there exists a constant $C>0$ depending on $\omega_X$, $K$, $n$, $p$, and $\|f\|_{L^1(\log)^n(\log \log L)^p}$ such that
        \begin{equation}
            |\varphi(x)-\varphi(y)|\leq \frac{C}{|\log(-\log d(x,y))|^\alpha},
        \end{equation} for all $x,y\in K$, where $d(x,y)$ denotes the geodesic distance of the two points $x,y$ with respect to $\omega_X$ and $\alpha\in(0,\frac{p-n}{n})$.
    \end{theorem}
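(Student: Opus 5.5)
The plan is to run the proof of Theorem~\ref{thm: mainthm} again, changing only the stability input: Theorem~\ref{thm: stability2} is replaced by a version adapted to the weight $w(t)=t(\log(1+t))^n(\log(1+\log(1+t)))^p$. This weight satisfies condition (K), so Ko{\l}odziej's $L^\infty$ estimate still gives $V_\theta-M_0\le\varphi\le V_\theta$. The auxiliary construction of Lemma~\ref{lem: key} with $r=1$ does not involve $f$, so \eqref{eq: MTOs} holds unchanged on $\Omega_s$:
\[
\int_{\Omega_s}e^{2v}\,dV\le C,\qquad v=\beta_0\Bigl(\frac{\psi-\varphi-s}{A_s^{1/(n+1)}}\Bigr)^{(n+1)/n}.
\]

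The first change is in the Young-type pairing. I would use $\eta(x)=(\log(1+x))^n(\log(1+\log(1+x)))^p$ in place of $(\log(1+x))^p$. Its inverse is roughly $\eta^{-1}(y)\approx \exp\bigl(y^{1/n}(\log y)^{-p/n}\bigr)$, so the natural power of $v$ to pair with $f$ is $v^n(\log(1+v))^p$. Young's inequality then gives
\[
v^n(\log(1+v))^pf\le f\,\eta(f)+C e^{2v},
\]
and hence
\[
\int_{\Omega_s}(\psi-\varphi-s)^{n+1}(\log(1+\cdots))^pf\,dV\lesssim A_s^{n/(n+1)\cdot n/n}\,\ldots
\]
The exponent of $(\psi-\varphi-s)$ is now exactly $n+1$, that is, $p$ is replaced by $n$ in the old Hölder computation. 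This puts us at the borderline case $\alpha_0=0$, and only the $\log$ factor gains.

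The key step, and the main obstacle, is therefore redoing the De Giorgi iteration with a logarithmic gain in place of a power gain. I would apply Hölder/Young in Orlicz form, Proposition~\ref{prop: HY}, with the weight $(\log)^p$ on the level set. This should give an inequality of the form
\[
tH(s+t)\le B_0\,H(s)\,\bigl|\log H(s)\bigr|^{-p/n}\qquad (\text{up to harmless factors}),
\]
where $H(s)=\mu(\Omega_s)$. I would then run a De Giorgi-type iteration for this recursion: choose $s_{k+1}-s_k=2B_0|\log H(s_k)|^{-p/n}$, so that $H(s_{k+1})\le H(s_k)/2$. Then $|\log H(s_k)|\gtrsim k+|\log H(\varepsilon)|$, and the increments sum to a finite total because $p/n>1$. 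The total is about $|\log H(\varepsilon)|^{1-p/n}$. This yields
\[
\sup_X(\psi-\varphi)\le \varepsilon+C|\log\mu(\Omega_\varepsilon)|^{-(p-n)/n}.
\]

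To finish, Proposition~\ref{prop: HY} with $(w^*)^{-1}(t)\gtrsim \log t\,(\log\log t)^{p}/\ldots$ bounds $\mu(\Omega_\varepsilon)$ by a power of $\log$ of the volume, which only improves the estimate. One should get
\[
\sup_X(\psi-\varphi)\le C\,\bigl[\log(-\log\|(\psi-\varphi)_+\|_1)\bigr]^{-\alpha}
\]
for $\alpha<\frac{p-n}{n}$, after choosing $\varepsilon=[\log(-\log\|\cdot\|_1)]^{-\alpha}$. The regularization part is then verbatim the proof of Theorem~\ref{thm: mainthm}, with $b(\delta)=(\log(-\log\delta))^{-\alpha}$. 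Since $\|(\rho_\delta\varphi-\varphi)_+\|_1\le C\delta^2$, we get $\sup(\varphi_\delta-\varphi)\le C b(\delta)$. The Kiselman argument is unchanged, except that the factor $\kappa(z)$ now only rescales $\delta$ inside a double log, which is harmless. This gives $\rho_\delta\varphi-\varphi\le C(\log|\log\delta|)^{-\alpha}$ on $U'$. Finally, Lemma~\ref{lem:regularization-to-modulus}, whose oscillation iteration works for any slowly varying modulus, concludes the proof.
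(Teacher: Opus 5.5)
Your proposal is correct and follows essentially the same route as the paper. The common steps are: the unchanged estimate \eqref{eq: MTOs}; a Young pairing giving $tH(s+t)\le B_0H(s)\,|\log H(s)|^{-p/n}$; a De Giorgi-type iteration with steps of size $\sim|\log H(s_k)|^{-p/n}$, which sum to $\lesssim|\log H(\varepsilon)|^{1-p/n}$ because $p>n$; then Proposition~\ref{prop: HY}, the Demailly--Kiselman argument with $b(\delta)=(\log(-\log\delta))^{-\alpha}$, and Lemma~\ref{lem:regularization-to-modulus}.

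The slips are cosmetic. The right-hand side of your garbled display should read $\lesssim A_s$; your pairing is exact, since $\eta^{-1}\bigl(v^n(\log(1+v))^p\bigr)=e^v-1$. Also, $(w^*)^{-1}(t)$ grows like $(\log t)^n(\log\log t)^p$, though any positive power of $\log t$ would suffice for the double-logarithmic bound.
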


\begin{proof}
The proof is a slight modification of that of Theorem~\ref{thm: mainthm},
so we only indicate the points where the argument differs.
For $0\geq\psi\in\PSH(X,\theta)$, we set
\[
\Omega_s:=\{\varphi<\psi-s\},
\qquad
H(s):=\mu(\Omega_s).
\]
As in the proof of Theorem~\ref{thm: stability2}, Lemma~\ref{lem: key}
and the Skoda--Tian integrability theorem (Theorem~\ref{thm: Skoda/Tian}) yield
\[
\int_{\Omega_s}
\exp\left[
2\beta_0
\left(
\frac{\psi-\varphi-s}
{A_s^{1/(n+1)}}
\right)^{\frac{n+1}{n}}
\right]dV
\le C,
\]
where
\[
A_s:=\int_{\Omega_s}(\psi-\varphi-s)f\,dV.
\]
By the H\"older--Young inequality with the Young function
corresponding to
\[
f\in L^1(\log L)^n(\log\log L)^p,
\]
one obtains
\begin{equation}\label{eq:loglog-degiorgi}
tH(s+t)
\le
\frac{B_0 H(s)}
{\bigl[\log(1/H(s))\bigr]^{p/n}},
\qquad
0<t\le1,\quad s\ge0,
\end{equation}
where $B_0>0$ is uniformly controlled.
Set
$
g(s):=-\log H(s)$.
Then \eqref{eq:loglog-degiorgi} yields
\begin{equation}\label{eq: De Giorgi}
    g(s+t)
\ge
g(s)+\log t-\log B_0+\frac{p}{n}\log g(s).
\end{equation}
If $eB_0[g(\varepsilon)]^{-p/n}\geq 1$, we are done. Otherwise, starting from $s_0=\varepsilon$, define inductively $(s_j)_{j\in\mathbb N}$ by induction setting 
\[s_0=\varepsilon,\quad s_{j+1}=s_j+eB_0[g(s_j)]^{-p/n}. \]
Since $[g(s)]^{-p/n}$ is decreasing, we have $$0\leq t_j=s_{j+1}-s_j=eB_0[g(s_j)]^{-p/n}\leq eB_0[g(\varepsilon)]^{-p/n}\leq 1.$$ This allows to use~\eqref{eq: De Giorgi} to obtain $g(s_j)\geq j+g(\varepsilon)\geq j$. Now, we have
\[s_\infty=\lim_{j}s_j=s_0+\sum_{j}(s_{j+1}-s_j)\leq s_0+eB_0\sum_j (g(\varepsilon)+j)^{-p/n}\leq s_0+\frac{eB_0ng(\varepsilon)^{1-p/n}}{p-n}. \]
It follows that $g(s)=+\infty$ so $H(s)=0$ for $s\ge s_\infty$. Hence, $\theta_\varphi^n(\{\varphi<\psi-s_\infty\})=0$, and by Proposition~\ref{prop: domination}, $\varphi\geq \psi-s_\infty$. 
Consequently,
\[
\sup_X(\psi-\varphi)
\le
\varepsilon+
\frac{C}
{\bigl[-\log H(\varepsilon)\bigr]^{(p-n)/n}}.
\]
By the H\"older-Young inequality (Proposition~\ref{prop: HY}), we obtain
   \[\sup_X(\psi-\varphi)\leq \varepsilon+\frac{C}{[\log (\log(\varepsilon/\|(\psi-\varphi)\|_1))]^{(p-n)/n}}. \]
  Choosing $\varepsilon:=[\log (-\log(\|(\psi-\varphi)\|_1))]^{-\gamma}$ for any $\gamma\in (0,\frac{p-n}{n})$, we obtain
  \[\sup_X(\psi-\varphi)\leq \frac{B}{[\log (-\log(\|(\psi-\varphi)\|_1))]^{\gamma}}. \]

The remainder of the proof is identical to the
Demailly--Kiselman argument used in the proof of
Theorem~\ref{thm: mainthm}, with $
(-\log\delta)^{-\gamma}$ replaced by $\bigl[\log(-\log\delta)\bigr]^{-\gamma}$.
We obtain, for every $K\Subset\textrm{Amp}(\theta)$,
\[
\rho_\delta\varphi-\varphi
\le
\frac{C_K}
{\bigl[\log(-\log\delta)\bigr]^\gamma}.
\]
Applying Lemma~\ref{lem:regularization-to-modulus}, with $|\log\delta|^{-\gamma}$ replaced by $|\log(-\log\delta)|^{-\gamma}$, yields
\[
|\varphi(x)-\varphi(y)|
\le
\frac{C_K}
{\bigl|\log(-\log d_{\omega_X}(x,y))\bigr|^\gamma},
\qquad x,y\in K,
\]
which completes the proof.
\end{proof}

    \bibliographystyle{alpha}
	\bibliography{bibfile}	
\end{document}